\documentclass{article}
\usepackage{graphicx} 
\usepackage{mathrsfs}
\usepackage{amsthm}
\usepackage{amsfonts,amssymb}
\usepackage{amscd}
\usepackage{amsmath}
\usepackage[all]{xy}
\usepackage{enumerate}
\usepackage{textcomp}
\usepackage{xcolor}
\usepackage{tikz}
\usetikzlibrary{arrows.meta, decorations.pathreplacing, calc}

\usepackage{subcaption} 

\usetikzlibrary{trees,quotes,positioning} 

\newtheorem{thm}{Theorem}[section]
\numberwithin{equation}{thm}

\newtheorem{definition}[thm]{Definition}
\newtheorem{cor}[thm]{Corollary}

\newtheorem{proposition}[thm]{Proposition}
\newtheorem{rk}[thm]{Remark}
\newtheorem{lemma}[thm]{Lemma}
\newtheorem{setup}[thm]{Setup}

\newtheorem{Notation}[thm]{Notation}

\usepackage{graphicx} 

\title{Positivity of tangent bundle of weak Del Pezzo surfaces of degree $4$}
\author{Qimin Zhang}

\begin{document}
\date{}
\maketitle
\noindent\textbf{Abstract.}
In this paper, we prove that for any weak Del Pezzo surface $S$ of degree at least $4$, the tangent bundle $T_S$ is almost nef. For the proof, we use total dual VMRTs induced by conic bundle structures.

\section{Introduction}

The positivity properties of the tangent bundle play an important role in the
birational and differential–geometric classification of projective varieties.
For example, a famous result of Mori \cite{zbMATH03657924} states that a smooth projective
variety with ample tangent bundle is isomorphic to the projective space
$\mathbb{P}^n$.

Motivated by this theorem, weaker positivity conditions on the tangent bundle
have been widely studied. In particular, for projective manifolds with nef
tangent bundle, Campana and Peternell obtained a classification of smooth
surfaces with nef tangent bundle and conjectured that any projective manifold
with nef tangent bundle should be homogeneous
\cite{zbMATH04204552, zbMATH00572322}.

More recently, attention has turned to even weaker positivity conditions,
such as pseudo-effectivity and almost nefness of the tangent bundle. 
Recall that for a
projective manifold $X$, a vector bundle $E$ is said to be
\emph{pseudo-effective} if the tautological line bundle
$\mathcal{O}_{\mathbb{P}(E)}(1)$ is pseudo-effective.
We also recall that:
\begin{definition}\label{def:almost nef}
Let $X$ be a projective manifold. A vector bundle $E$ is said to be
\emph{almost nef} if there exists a countable family of proper closed subsets
$\{Z_i\subsetneq X\}$ such that for every curve $C \subset X$ with
$C\not\subset \bigcup_i Z_i$, the restriction $E|_C$ is nef.
\end{definition}

These two notions of positivity are closely related. On a projective manifold, a line bundle is pseudo-effective if and only if it is almost nef. In contrast, for vector bundles, almost nefness implies pseudo-effectivity, whereas the converse generally fails \cite{zbMATH00572322, zbMATH01911897, zbMATH06154993}.

There are several classification results for projective manifolds with almost
nef tangent bundle, although the general picture is still incomplete. In the
surface case, Iwai obtained a classification of smooth projective surfaces
without $(-1)$-curves whose tangent bundle is almost nef
\cite[Proposition~4.11]{zbMATH07558532}. More precisely, if $S$ is such a
surface, then $T_S$ is almost nef if and only if $S$ is one of the following:
up to a finite étale cover, an abelian surface; a minimal ruled surface over either $\mathbb{P}^1$ or an elliptic curve; $\mathbb{P}^2$.

For surfaces containing $(-1)$-curves the situation is more subtle, even for weak Del Pezzo
surfaces the situation is not clear. For a (strong) Del Pezzo surface $S$ of degree $d$  \cite[Theorem~1.2]{zbMATH07730475} shows that
\[
T_S \text{ is pseudo-effective } \Longleftrightarrow d\geq 4.
\]
Moreover, combining the techniques of
\cite{zbMATH07730475, arXiv:2408.14411}, we have the following: for a Del Pezzo surface $S$ of degree $d$,
\[
T_S \text{ is almost nef } \Longleftrightarrow d\geq 4.
\]
The case of degree $4$ is thus a boundary case for this theory, it is also interesting for the existence for the Lagrangian fibration of $\Omega_X$ \cite{zbMATH08139336,zbMATH07969268}. 
In this paper, we prove the following result.

\begin{thm}[Main Theorem]\label{thm:Main}
Let $S$ be a weak Del Pezzo surface of degree~$4$.
Then the tangent bundle $T_S$ is almost nef.
\end{thm}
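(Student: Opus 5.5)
We will use the standard criterion for almost nefness on surfaces via total dual VMRTs, applied to the conic bundle structures on $S$. Write $\pi\colon\mathbb{P}(T_S)\to S$ and $\xi=\mathcal{O}_{\mathbb{P}(T_S)}(1)$, using Grothendieck's convention so that sections of $\xi$ correspond to $H^0(S,T_S)$ and lines in $T_S$ are points of $\mathbb{P}(T_S)$.

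The first step is a reduction. $T_S$ is almost nef as soon as there is a countable family of proper closed subsets of $S$ outside which $\xi$ is nonnegative on all curves. Concretely, we want effective divisors $D_1,\dots,D_k$ in $|a_i\xi-\pi^*L_i|$, with each $L_i$ nef (or at least $L_i\cdot C\ge 0$ for the curves we consider), such that $\bigcap_i D_i$ does not dominate $S$. Given such divisors, take a curve $C\subset S$ not contained in $\pi(\bigcap_i D_i)$ and not in the finitely many negative curves. Let $\tilde C\subset\mathbb{P}(T_S|_C)$ be any curve. Then $\tilde C\not\subset D_i$ for some $i$, so
\[
a_i\,\xi\cdot\tilde C\ \ge\ L_i\cdot\pi_*\tilde C\ \ge 0 .
\]
Hence $\xi|_{\mathbb{P}(T_S|_C)}$ is nef, which gives nefness of $T_S|_C$.

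The second step constructs the divisors $D_i$. A weak Del Pezzo surface of degree $4$ is a blow-up of $\mathbb{P}^2$ in $5$ points in almost general position, possibly infinitely near. Its conic classes $F$ satisfy $F^2=0$ and $-K_S\cdot F=2$. Typical examples are $H-E_j$ and $2H-E_{j}-E_k-E_l-E_m$, and there are ten such classes. Each class that is base-point free gives a conic bundle $\varphi_F\colon S\to\mathbb{P}^1$. Differentiating $\varphi_F$ gives a map $T_S\to\varphi_F^*T_{\mathbb{P}^1}=\mathcal{O}(2F)$, whose kernel is the relative tangent sheaf. The associated total dual VMRT is the divisor $\check{\mathcal{C}}_F\subset\mathbb{P}(T_S)$ of tangent directions to the fibres. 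It lies in $|\xi-\pi^*(2F)|$ up to corrections supported over singular fibres. More precisely, the tangent directions of fibres give a section of $\xi\otimes\pi^*\mathcal{O}(-2F)$ twisted by the singular fibre components, so one computes its class as $\xi-\pi^*M_F$ for an explicit $M_F$. Through a general point $x\in S$, the different conic bundles have fibres with distinct tangent directions. So if we take two conic bundles $F_1,F_2$ with $F_1\cdot F_2=1$ (for instance $H-E_1$ and $H-E_2$), then $\check{\mathcal{C}}_{F_1}\cap\check{\mathcal{C}}_{F_2}$ lies over the finite set of tangency points together with the singular fibres. This gives the required non-dominating intersection.

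The main obstacle is showing that each $M_F$ is nef, or at least nonnegative on curves not contained in a fixed exceptional locus. In the weak case the $(-2)$-curves can appear as components of singular fibres, or as sections. On them $M_F$ may be negative, and some conic classes may fail to be base-point free. Our first attempt would be a case analysis over the configurations of $(-2)$-curves, which are classified by the root subsystems of $D_5$. In each case we choose two conic classes $F$ such that no $(-2)$-curve is a section, so that every $(-2)$-curve is a fibre component. We then check that $M_F$ equals $2F$ minus only nonnegative combinations of fibre components, which meet a general moving curve nonnegatively. Any remaining negative curves are finitely many and can be added to the exceptional set $Z_i$. If some configuration does not admit two such conic bundles, we would instead pass to the minimal resolution and argue using the nef-ness statement for degenerations, via upper semicontinuity of almost nefness in a family of weak Del Pezzo surfaces degenerating from a general one.
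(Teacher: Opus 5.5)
There is a genuine gap, and it sits at the step you call the main obstacle: the class of the total dual VMRT is wrong. The divisor of fibre tangent directions is the zero locus of the section of $\xi\otimes\pi^*T_f^{*}$ given by the saturated inclusion $T_f\hookrightarrow T_S$, so it lies in $|\xi-\pi^*T_f|$ with
\[
T_f=-K_S+f^*K_{\mathbb{P}^1}+R(f)=-K_S-2F+R(f).
\]
It does not lie in $|\xi-\pi^*(2F)|$ up to fibre corrections. You dropped the twist by $\det T_S=-K_S$. Indeed $df$ is a section of $\Omega_S(2F)\cong T_S\otimes\mathcal{O}_S(K_S+2F)$, so it lives in $|\xi+\pi^*(K_S+2F)|$.

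With the correct class, your reduction requires every $L_i=T_{f_i}$ to be nonnegative on general curves, which is the same as being pseudo-effective. This already fails on a strong del Pezzo surface of degree $4$, which the theorem covers. There all singular fibres are reduced pairs of $(-1)$-curves, so $R(f)=0$. For example, $F=H-E_1$ gives $T_f=H+E_1-E_2-\dots-E_5$, with $-K_S\cdot T_f=0$ and $T_f^2=-4$. Since $-K_S$ is ample, $T_f$ is not pseudo-effective, and the same computation holds for all ten conic classes. So no choice of two single VMRTs $D_{F_1},D_{F_2}$ makes your inequality $a_i\,\xi\cdot\tilde C\ge L_i\cdot C\ge 0$ usable. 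Your proposed case analysis over $(-2)$-configurations would be checking the wrong divisor.

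Two ideas are missing.

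First, the negativity of $-K_S-2F$ has to be compensated. One way is to pair conic bundles with $F+G\le -K_S$, so that $T_f+T_g=-2K_S-2F-2G+R(f)+R(g)$ is effective. The paper takes $F_1=\mu^*H-\mu^*C_{i_5,1}$ and $F_2=2\mu^*H-\sum_{t\le4}\mu^*C_{i_t,j_t}=-K_S-(\mu^*H-\mu^*C_{i_5,j_5})$, which gives $T_f+T_g\ge 2(\mu^*C_{i_5,1}-\mu^*C_{i_5,j_5})\ge 0$. When no four points are in good position, so there is no degree-$2$ conic bundle, the paper instead uses non-reduced fibres of a suitable degree-$1$ conic bundle to make a single $T_f$ effective through $R(f)$.

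Second, the paper then has only one divisor $D\in|k\xi-\pi^*E|$ with $E$ effective, so your "intersection does not dominate" trick is unavailable. Curves inside $D$ must be controlled separately. The paper does this by proving $\xi|_D$ pseudo-effective. It uses $\xi|_D\simeq-\sum Q_i-q^*R(f)+2q^*F$ on the blow-up $q\colon D\to S$, together with the multiplicity bound $\sum_\alpha\bigl(1-\frac{1}{2M(F_\alpha)}\bigr)\le 2$. Your mechanism could be repaired in the strong case by intersecting two of the five members $D_F+D_{-K_S-F}$ of $|2\xi|$. In the weak case without good position, however, only one suitable divisor is produced.

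Your fallback also fails. $S$ is already smooth: it is the minimal resolution of its anticanonical model. Moreover, nefness-type properties pass from a special member of a family to very general members, not from the general member to a degeneration. So almost nefness for general degree-$4$ del Pezzo surfaces gives nothing for $S$.
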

As an immediate corollary of Theorem \ref{thm:Main}, if $S$ is a weak Del Pezzo surface of degree at least $4$, then $T_S$ is almost nef.

For the strict Del Pezzo surfaces the result is based on a decomposition of elements in $|\mathcal{O}_{\mathbb{P}(T_S)}(2)|$. There are five pairs of total dual VMRTs, such that the sum of each pair is $|\mathcal{O}_{\mathbb{P}(T_S)}(2)|$. However, for weak Del Pezzo surfaces of degree~4, we do not have this good property. Thus, understanding the almost nefness of the tangent bundle therefore requires a more refined geometric analysis. 
\medskip

\noindent
\textbf{Strategy of the proof.}
In the strategy, we work on a weak Del Pezzo surface $S$ of degree $4$.
Our approach is inspired by \cite{zbMATH07730475, arXiv:2408.14411}. 
Let 
\[
\pi:\mathbb{P}(T_S)\rightarrow S
\]
be the natural projection. The basic idea is to construct an effective divisor
\[
D \in |k\zeta - \pi^*E|,
\]
where $k\in \mathbb{Z}_{>0}$ and $E$ is an effective divisor on $S$, such that
$\zeta|_D$ is pseudo-effective. We refer to
Proposition~\ref{all} for a precise formulation.

By \cite[Corollary~2.13]{zbMATH07730475}, one can construct effective divisors
\[
D_i \in |\zeta - \pi^*T_{f_i}|
\]
associated with conic bundle structures
$f_i:S\rightarrow \mathbb{P}^1$. These divisors can be interpreted as total
dual VMRTs. The strategy is therefore to find several such divisors $D_i$
satisfying:
\begin{itemize}
\item $\zeta|_{D_i}$ is pseudo-effective for each $i$;
\item the sum $\sum D_i$ belongs to a linear system of the form
$|k\zeta - \pi^*E|$ with $E$ effective.
\end{itemize}

To implement this strategy, we proceed as follows. In Section~2, we establish a
criterion to ensure that $\zeta|_D$ is pseudo-effective (Proposition \ref{pro:zeta on D} (3)). The main part of our effort is then to describe all the conic bundle structures on $S$:
let $f:S\rightarrow \mathbb{P}^1$ be a conic bundle structure, and let $F$ be a
general fibre. If $\mu:S\rightarrow \mathbb{P}^2$ is the blow-up map and $H$
denotes the hyperplane class on $\mathbb{P}^2$, then
\[
F\cdot \mu^*H \in \{1,2\}
\]
by Proposition~\ref{pro:conic bundles looks like}. Accordingly, we say that
the conic bundle structure is of degree $1$ or $2$.

Conic bundle structures of degree $1$ always exist on $S$. 
In contrast, the existence of conic bundle structures of degree $2$ is
equivalent to the existence of four points in good position, i.e.\ no three of
them are colinear in the generalized sense (see Definition~\ref{def:Colinear}).

We introduce the notation of good position to distinguish different types of weak Del Pezzo surfaces (Definition \ref{def:good position}), it might be interesting for weak Del Pezzo surfaces of other degree. 

In Section~5, we show that for every divisor $D_i$ arising from a conic bundle
structure, the restriction $\zeta|_{D_i}$ is pseudo-effective.

Finally, in Section~6, we prove the main theorem. If there exist four points in
good position, we construct divisors $D_1$ and $D_2$ corresponding to conic
bundle structures of degree $1$ and $2$, respectively, such that
\[
D_1 + D_2 \in |2\zeta - \pi^*(T_{f_1}+T_{f_2})|
\]
and $T_{f_1}+T_{f_2}$ is effective. If no such four points exist, then we show
that there exists a degree $1$ conic bundle structure $f$ such that
\[
D_1 \in |\zeta - \pi^*T_f|
\]
and $T_f$ is effective. In both cases, we conclude that $T_S$ is almost nef.

\section*{Acknowledgements}

The author would like to express his sincere gratitude to Prof.~Andreas Höring
and Prof.~Junyan Cao for their guidance, encouragement, and many helpful
discussions. The author is supported by a doctoral fellowship from the École
Doctorale SFA at Université Côte d’Azur.

\section{Preliminaries}
We work over the complex numbers.
All varieties are assumed to be projective and irreducible.
For general background we refer to \cite{Hartshorne1977}.
For divisors $E_1,E_2$ we write $E_1\ge E_2$ if
$E_1-E_2$ is pseudo-effective.
  \begin{definition}\label{def:psf}
Let $X$ be a projective manifold and $L$ a line bundle on $X$.
Let $D=\sum a_iD_i$ be an effective divisor with $D_i$ irreducible.
We say that $L|_D$ is pseudo-effective if
$L|_{D_i}$ is pseudo-effective for every component $D_i$.
\end{definition}
\begin{proposition}\label{all}
    Let $S$ be a smooth surface, and let $\pi:\mathbb{P}(T_S)\rightarrow S$ be the natural projection and  $\zeta$ be the tautological bundle on $\mathbb{P}(T_S)$. Assume there exists a positive integer $k$ and an effective divisor $E$ on $ S
    $ such that 
    \begin{enumerate}[(1)]
        \item $(k\zeta-\pi^*E)$ is effective
        \item There exists a divisor $D\in |k\zeta-\pi^*E|$, such that $\zeta|_D$ is pseudo-effective
    \end{enumerate}
    Then $T_S$ is almost nef.
    \begin{proof}

Assume that $T_S$ is not almost nef.
Then there exists a covering family of curves
$\{C_\alpha\}_{\alpha\in\Delta}$ such that
$T_S|_{C_\alpha}$ is not nef. By Definition \ref{def:almost nef}, the divisor 
\[\mathcal{O}_{\mathbb{P}(T_S|_{C_{\alpha}})}(1)\simeq \zeta |_{\pi^{-1}(C_{\alpha})}\]
is not nef. So for each $\alpha\in \Delta$, there exists a curve $\widetilde{C}_{\alpha}\subset \pi^{-1}(C_{\alpha})$, such that $\zeta \cdot \widetilde{C_{\alpha}} < 0$. 

Since $\{C_{\alpha}\}_{\alpha\in \Delta}$ is a covering family, it gives a movable class whose number has positive intersection number with $E$. Up to assuming that all the curves of the covering family are in the movable class, we have
for all $\alpha\in \Delta$,
\[
C_{\alpha}.E\geq 0
\] 
Thus,
\[
D \cdot \widetilde{C_{\alpha}} = (k\zeta - \pi^*E) \cdot \widetilde{C_{\alpha}} < 0,
\]
and therefore $\widetilde{C_{\alpha}} \subset D$, for all $\alpha\in \Delta$.

Denote the closure of $\bigcup_{\alpha\in I}\widetilde{C_{\alpha}}$ in the Zariski topology by $D'$. By \cite{zbMATH06154993}, a pseudo-effective divisor has non-negative intersection with all movable curves. So $\zeta|_D'$ is not pseudo-effective. However, by assumption, the restriction $\zeta|_D$ is pseudo-effective. This implies $\zeta|_{D'}$ is pseudo-effective by Definition \ref{def:psf}. We get a contradiction.
    \end{proof}
\end{proposition}

\begin{Notation}\label{notation:relative tangent bundle}
Let $f:S\to C$ be a fibration from a smooth surface
to a smooth curve.

\begin{enumerate}[(1)]
\item
Let
\[
\Delta=\{a\in C \mid f^{-1}(a)\ \text{is singular}\}.
\]

\item
Define ramification divisor of $f$
\[
R(f)=\sum_{a\in\Delta}
\left(f^{-1}(a)-(f^{-1}(a))_{\mathrm{red}}\right).
\]

\item
The relative tangent bundle $T_f:=T_{S/C}$ is defined as
the saturation of $\Omega_{S/C}^*$ in $T_S$.

\item \cite{arXiv:2408.14411}
There exists a zero-dimensional scheme $\Gamma$
such that
\begin{equation}\label{exact}
0\to T_{S/C}\to T_S\to
(-f^*K_C-R(f))\otimes\mathcal I_\Gamma\to0 .
\end{equation}
\end{enumerate}  
\end{Notation}
\begin{proposition}\label{Pro:VMRT}\cite{arXiv:2408.14411}
    Let $S$ be a smooth surface and $f: S \rightarrow \mathbb{P}^1$ be a fibration. Then there exists an effective divisor $D\subset \mathbb{P}(T_S)$, such that $D\in |\zeta-\pi^*T_{f}|$, where $T_{f}$ is the relative tangent bundle defined in Notation \ref{notation:relative tangent bundle}.

    \begin{proof}
    Take the determinant of the exact sequence \eqref{exact}, it follows that $T_{f}=-K_S+f^*K_{\mathbb{P}^1}+R(f)$. Twist the exact sequence \eqref{exact} by $T_{f}^*$, we get
    \begin{align}
        0\rightarrow \mathcal{O}_S \rightarrow T_S\otimes T_{f}^*  \rightarrow (-f^*K_{\mathbb{P}^1}-R(f))\otimes T_{f}^*\otimes \mathcal{I}_{\Gamma}  \rightarrow 0
    \end{align}
    Define $D:= \mathbb{P}((-f^*K_{\mathbb{P}^1}-R(f))\otimes \mathcal{I}_{\Gamma}) \subset \mathbb{P}(T_S)$. 
    Then $D\simeq \mathbb{P}((-f^*K_{\mathbb{P}^1}-R(f))\otimes T_{f}^*\otimes \mathcal{I}_{\Gamma})$, therefore $D\in |\mathcal{O}_{T_S\otimes T_{f}^*}(1)|=|\zeta-\pi^*T_{f}|$. 
    \end{proof}
\end{proposition}
    
\begin{proposition}\label{Q-q}
    With the notation of Notation \ref{notation:relative tangent bundle} and let $D$ to be the effective divisor defined in Proposition \ref{Pro:VMRT}. Assume that the fibres of $f$ have SNC supports. Then $\Gamma$ is a set of reduced points and there is birational morphism $q:D\rightarrow S$ induced by blowing up the points in $\Gamma$, such that $q=\pi|_D$.
    \begin{proof}
        Since the fibres of $f$ have SNC support, the scheme $\Gamma$ is reduced. Since $\Gamma$ is reduced, by \cite[Chapter~II, \S7]{Hartshorne1977}, we have $Sym^n\mathcal{I}_{\Gamma}\simeq \mathcal{I}_{\Gamma}^n$ for all $n\in \mathbb{Z}_{>0}$. So
    \begin{equation}
        \begin{split}
            D&=Proj(\oplus_n Sym^n ((-f^*K_{\mathbb{P}^1}-R(f))\otimes \mathcal{I}_{\Gamma}))\\
            &\simeq Proj (\oplus_n Sym^n\mathcal{I}_{\Gamma}) \\
            &\simeq Proj (\oplus_n \mathcal{I}_{\Gamma}^n)\\
            &\simeq Bl_{\mathcal{I}_{\Gamma}} S
        \end{split}
    \end{equation}
    So D is isomorphic to the blow up of S along $\Gamma$. 
    \end{proof}
\end{proposition}

\begin{Notation}\label{notation:F_alpha}
    Let $S$ be a smooth surface, and $f:S\rightarrow \mathbb{P}^1$ be a fibration. Assume that the fibres of $f$ have SNC support, then 
    \begin{enumerate}
        \item [(1)] Denote all the singular fibres by $\{F_{\alpha}\}_{\alpha\in \Delta}$.
        \item [(2)] Fix an $\alpha\in \Delta$, write $F_{\alpha}=\sum\limits_{i=1}^m a_iF_{i,\alpha}$, where $a_i>0$ and $F_{i,\alpha}$ are irreducible divisors. Then define $M(F_{\alpha}):=\max_{i=1}^m\{a_i\}$
        \item [(3)] Since each fibre has SNC support, by Proposition \ref{Pro:VMRT}, $\Gamma$ is a reduced 0-scheme. So we can write $\Gamma=\{q_1,...,q_s\}$.
        \item [(4)] Define $Q_i:=q^{-1}(q_i)$ for all $1\leq i\leq s$, where $q$ is defined in Proposition \ref{Q-q}. Since $q$ is blow up at smooth points, $Q_i$ are smooth curves. 
    \end{enumerate}
    
\end{Notation}

\begin{proposition}\label{pro:zeta on D}
    With the notation of Notation \ref{notation:F_alpha} and let $D$ to be the divisor defined in Proposition \ref{Pro:VMRT}. Then the following holds:
    \begin{enumerate}[(1)]
        \item 
        $\zeta|_{D} \simeq \sum\limits_{\alpha \in \Delta} \left( -\sum\limits_{q_i \in F_\alpha} Q_i - q^*(F_\alpha - (F_\alpha)_{\mathrm{red}}) \right) + 2q^* F$
        \item For each $\alpha\in \Delta$, the divisor
        \[
        -\sum_{q_i \in F_\alpha} Q_i - q^*(F_\alpha - (F_\alpha)_{\mathrm{red}}) + (1 - \frac{1}{2M(F_{\alpha})}) q^* F_\alpha
        \]
        is effective.
        \item If $\sum\limits_{\alpha\in \Delta}(1 - \frac{1}{2M(F_{\alpha})})\leq 2$, then $\zeta|_D$ is pseudo-effective.
    \end{enumerate}
    \begin{proof}
        \begin{enumerate}[(1)]
\item Since
\[
K_{\mathbb{P}(T_S)}
=\pi^*\bigl(K_S+\det(T_S)\bigr)-2\zeta
=-2\zeta,
\]
the adjunction formula on $\mathbb{P}(T_S)$ yields
\begin{align}\label{AD}
    K_{D} = (-2\zeta + D)|_{D}
\end{align}
Thus,
\begin{align*}
\zeta|_{D}
&= -K_D + (D - \zeta)|_D
&& \text{by }\eqref{AD} \\
&\simeq -K_D - \pi^* T_{f}\big|_{D}
&& \text{since } D \simeq \zeta - \pi^* T_{f} \\
&= -K_D - \bigl(
      -\pi^* K_S
      + \pi^* f^* K_{\mathbb{P}^1}
      + \pi^* R(f)
    \bigr)\big|_{D}
&& \text{by \eqref{exact}} \\
&\simeq -K_D + \pi^* K_S\big|_{D}
   + 2 \pi^* F\big|_{D}
   - \pi^* R(f)\big|_{D}
&&  K_{\mathbb{P}^1} = -2\mathcal{O}_{\mathbb{P}^1}(1) \\
&= \bigl(-K_D + q^* K_S - q^* R(f)\bigr) + 2 q^* F
&& \text{rewriting via } q \\
&= -\sum_{i=1}^s Q_i - q^* R(f) + 2 q^* F
&&  K_D = q^* K_S+\sum_{i=1}^s Q_i \\
&= \sum_{\alpha \in \Delta}
   \left(
     - \sum_{q_i \in F_\alpha} Q_i
     - q^*\bigl(F_\alpha - (F_\alpha)_{\mathrm{red}}\bigr)
   \right)
   + 2 q^* F
\end{align*}

\item Up to renumbering, we can assume 
$\Gamma_\alpha := \Gamma \cap F_\alpha = \{q_1, \dots, q_n\}$.
Since $F_\alpha$ has simple normal crossing support, each $q_j$ is an
intersection point of distinct irreducible components of $F_\alpha$,
for $1 \leq j \leq n$.
Hence we have
\begin{equation}\label{eq:pullback-E}
q^* \left( \sum_{i=1}^m F_{i,\alpha} \right)
= \sum_{i=1}^m \widetilde{F}_{i,\alpha} + \sum_{j=1}^n 2 Q_j ,
\end{equation}
where $\widetilde{F}_{i,\alpha} \subset D$ are the strict transforms of $F_{i,\alpha}$.
Then we compute as follows:
\begin{align*}
& -\sum_{q_i \in F_\alpha} Q_i
  - q^*\bigl(F_\alpha - (F_\alpha)_{\mathrm{red}}\bigr)
  + \left( 1 - \frac{1}{2M(F_\alpha)} \right) q^* F_\alpha
&&  \\
= {}&
 -\sum_{q_i \in F_\alpha} Q_i
 + q^*(F_\alpha)_{\mathrm{red}}
 - \frac{1}{2M(F_\alpha)} q^* F_\alpha
&& \\
= {}&
 -\sum_{q_i \in F_\alpha} Q_i
 + q^*\!\left( \sum_{i=1}^m F_{i,\alpha} \right)
 - \frac{1}{2M(F_\alpha)} q^*\!\left( \sum_{i=1}^m a_i F_{i,\alpha} \right)
&&   F_\alpha = \sum_{i=1}^m a_i F_{i,\alpha} \\
\geq {}&
 -\sum_{q_i \in F_\alpha} Q_i
 + q^*\!\left( \sum_{i=1}^m F_{i,\alpha} \right)
 - \frac{1}{2} q^*\!\left( \sum_{i=1}^m F_{i,\alpha} \right)
&& a_i \le M(F_\alpha) \\
= {}&
 -\sum_{q_i \in F_\alpha} Q_i
 + \frac{1}{2} q^*\!\left( \sum_{i=1}^m F_{i,\alpha} \right)
&& \\
= {}&
 \frac{1}{2} \sum_{i=1}^m \widetilde{F}_{i,\alpha} \geq 0
&& \text{by \eqref{eq:pullback-E}} .
\end{align*}

\item It follows immediately from (1) and (2):
\begin{align}
    \begin{split}
        \zeta|_{D} &\simeq \sum\limits_{\alpha \in \Delta} \left( -\sum\limits_{q_i \in F_\alpha} Q_i - q^*(F_\alpha - (F_\alpha)_{\mathrm{red}}) \right) + 2q^* F\\
        & \simeq \sum\limits_{\alpha \in \Delta} \left( -\sum\limits_{q_i \in F_\alpha} Q_i - q^*(F_\alpha - (F_\alpha)_{\mathrm{red}})+\left( 1 - \frac{1}{2M(F_\alpha)} \right) q^* F_\alpha \right)  \\
        &+\left(2-\sum_{\alpha\in \Delta}\left( 1 - \frac{1}{2M(F_\alpha)} \right) \right)q^* F \geq 0
\end{split}
\end{align}
        \end{enumerate}
    \end{proof}
    
\end{proposition}

\section{Simple birational morphisms}

\begin{definition}\label{def:simple morphism}
Let $S$ and $S'$ be smooth projective surfaces, and let
\[
\mu \colon S \to S'
\]
be a birational morphism. Denote by $\operatorname{Exc}(\mu)$ the exceptional locus of $\mu$.
We say that $\mu$ is \emph{simple} if
\[
\operatorname{Exc}(\mu) = \bigcup_{i \in B} E_i,
\]
where each $E_i$ is a smooth rational curve with self-intersection
\[
E_i^2 = -1 \text{ or } -2.
\]
\end{definition}

\begin{definition}
    Let $\mu:S\rightarrow S'$ be a simple birational morphism. Then define the exceptional graph of $\mu$ as follows: 
    \begin{enumerate}[(1)]
        \item Each vertex $q_i$ corresponds to an exceptional curve $E_i$.
        \item A vertex $q_i$ is represented as "$\circ$" if $E_i$ is a $(-1)$-curve and a vertex $q_i$ is represented as "$\bullet$" if $E_i$ is a $(-2)$-curve.
        \item An edge connects $q_i$ and $q_j$ if $E_i \cap E_j \neq \emptyset$.
    \end{enumerate}
\end{definition}

\begin{proposition}\label{graph}
    Let $\mu:S\rightarrow S'$ be a simple birational morphism, then 
    \begin{enumerate}[(1)]
\item
Each connected component of the exceptional graph of $\mu$ is of the following form:
\[
\begin{tikzpicture}[baseline=(A.base)]
    \node (A) at (0,0) {$\bullet$};
    \node (B) at (1,0) {$\cdots$};
    \node (C) at (2,0) {$\bullet$};
    \node (E) at (3,0) {$\circ$};
    \draw (A) -- (B) -- (C) -- (E);

    \draw [decoration={brace, amplitude=5pt, mirror}, decorate, thick] 
      ([yshift=-5pt]A.south) -- ([yshift=-5pt]C.south)
      node [midway, below=5pt] {$k$ points};
\end{tikzpicture}
\]

\item Define $l$ to be the maximal length of the connected components and define $r$ to be number of the connected components. Then $\mu$ admits a factorization
\[
S = S_l \xrightarrow{\mu_l} S_{l-1} \xrightarrow{\mu_{l-1}} \cdots
\xrightarrow{\mu_2} S_1 \xrightarrow{\mu_1} S',
\]
together with a sequence of subsets
\[
A_l \subset A_{l-1} \subset \cdots \subset A_1 = \{1,\dots,r\},
\]
where each $\mu_j$ is the blow-up of the set of points
\[
\{\, p_{i,j} \mid i \in A_j \,\}, \qquad 1 \le j \le l.
\]

\end{enumerate}

\begin{proof}
(1) By Castelnuovo’s theorem, $S\rightarrow S'$ factors as a sequence of blow ups. In a sequence of blow-ups,
a new center cannot lie on a $(-2)$-curve, since blowing up such a point
would produce a curve of self-intersection $\leq -3$, contradicting the
simplicity assumption. Hence each blow-up either occurs on a $(-1)$-curve
or at a point disjoint from the exceptional locus. 

(2) The factorization follows from (1) by contracting the $i$-th point from right to left on
each chain, which yields the sets $A_j$.
\end{proof}
    
\end{proposition}

\begin{Notation}\label{notation:C_i,j}
Let $\mu:S\to S'$ be a simple birational morphism. By
Proposition \ref{graph}, the exceptional locus of $\mu$ is a disjoint
union of $r$ chains. We fix the following notation.

\begin{enumerate}[(1)]
\item For each $1\le i\le r$, let
\[
p_{i,1},\dots,p_{i,l(i)}
\]
be the sequence of infinitely near points corresponding to the $i$-th
chain, where $l(i)$ is its length. Set
\[
l:=\max_{1\le i\le r} l(i).
\]

\item For $1\le i\le r$ and $1\le j\le l(i)$, let
\[
C_{i,j}\subset S_j
\]
be the exceptional curve of the blow-up at $p_{i,j}$.

\item Let $E_{i,j}\subset S$ be the strict transform of $C_{i,j}$ on $S$.

\item We define
\[
\mu^*C_{i,j}:=(\mu_{j+1}\circ\cdots\circ\mu_l)^*C_{i,j}.
\]
\end{enumerate}

\medskip

\noindent
With this notation, the $i$-th chain has the following form:
\begin{center}
\begin{tikzpicture}[
  baseline=(A.base),
  node distance=12mm,
  every node/.style={font=\normalsize},
  ex/.style={circle, fill=black, inner sep=1.4pt},
  nonex/.style={circle, draw=black, fill=white, inner sep=1.4pt},
  lab/.style={font=\small, inner sep=1pt},
  conn/.style={thick, line cap=round, line join=round}
]

\node[ex]    (A) at (0,0) {};
\node        (B) at (1,0) {$\cdots$};
\node[ex]    (C) at (2,0) {};
\node[nonex] (E) at (3,0) {};

\draw[conn] (A) -- (B) -- (C) -- (E);

\node[lab, below=6pt of A] {$E_{i,1}$};
\node[lab, below=6pt of B] {$\cdots$};
\node[lab, below=6pt of C] {$E_{i,l(i)-1}$};
\node[lab, below=6pt of E] {$E_{i,l(i)}$};

\end{tikzpicture}
\end{center}
\end{Notation}
We summarize the notations in a commutative diagram:
\begin{center}
\vspace{0.5em}
$\xymatrix{
    S_{k+1} \ar[r]^{\mu_{k+1}} & S_{k} \\
    C_{i,k+1} \ar@{^{(}->}[u] \ar[dr]_{\mu_{k+1}} & C_{i,k} \ar@{^{(}->}[u] \ar[d]^{\rotatebox{90}{$\in$}} \\
    & p_{i,k+1}
}$
\end{center}

\begin{proposition}\label{Pro:!}
We adopt the Notation~\ref{notation:C_i,j}. The following holds: 
\begin{enumerate}[(1)]
    \item $\mu^*C_{i,j}=E_{i,j}+...+E_{i,l(i)}$, for all $1 \leq i \leq r$, $1\leq j\leq l(i)$
    \item $(\mu^*C_{i,j})^2 = -1$ for all $1 \leq i \leq r$, $1\leq j\leq l(i)$
    \item $\mu^*C_{i_1,j_1} \cdot \mu^*C_{i_2,j_2} = 0$ for all $(i_1,j_1) \neq (i_2,j_2)$
    \item $\mu^*L \cdot \mu^*C_{i,j} = 0$ for all $1 \leq i \leq r$, $1\leq j\leq l(i)$, and $L\in \operatorname{Pic}(S')$
    \item $-K_S = -\mu^*K_{S'} - \sum\limits_{i=1}^r\sum\limits_{j=1}^{l(i)} \mu^*C_{i,j}$
    \item $\{\mu^*\operatorname{NS}(S')\}\cup \{\mu^*C_{i,j}|1 \leq i \leq r$, $1\leq j\leq l(i)\}$ form a $\mathbb{Z}$-basis of $\operatorname{NS}(S)$
\end{enumerate}
    
    \begin{proof}
\textbf{(1)}
    Without loss of generality, we assume $i=1$. For
$1\le j\le l(1)-1$, let $C_{1,j}^m$ be the strict transform of $C_{1,j}$ on
$S_m$ for $m>j$. Since $\mu_{j+1}$ blows up a point on $C_{1,j}$, we have
\begin{align}\label{special structure-1}
    \mu_{j+1}^*C_{1,j}= C_{1,j+1}+ C_{1,j}^{j+1}
\end{align}
on $S_{j+1}$ 
By the construction in Proposition \ref{graph}, for every $k\ge j+1$ the point
$p_{1,k+1}$ does not lie on $C_{1,j}^k$, hence
\[
\mu_{k+1}^*C_{1,j}^k=C_{1,j}^{k+1}.
\]
for all $k\geq j+1$. Therefore the pull-back of $C_{1,j}^{j+1}$ to $S$ is just its strict transform,
namely $E_{1,j}$.
Pulling back formula \eqref{special structure-1} by $(\mu_{j+2}\circ...\circ \mu_{l})$, we get
\begin{align}\label{special structure-2}
    \mu^*C_{1,j}=\mu^*C_{1,j+1}+E_{1,j}
\end{align}
Formula \eqref{special structure-2} holds for all $1\leq j\leq l$, and obviously $\mu^*C_{1,l}=E_{1,l}$. We conclude by induction on $j$.

        \textbf{(2)} By the projection formula for $(\mu_{j+1}\circ...\circ\mu_{l})$, we obtain
        \[
        (\mu^*C_{i,j})^2 = (\mu_{j+1}\circ...\circ\mu_{l})^*C_{i,j} \cdot (\mu_{j+1}\circ...\circ\mu_{l})^*C_{i,j} = C_{i,j} \cdot C_{i,j} = (-1).
        \] 
        
        \textbf{(3)} If $i_1\neq i_2$, this is true because $\mu^*C_{i_1,j_1}$ and $\mu^*C_{i_2,j_2}$ have disjoint support. If $i_1=i_2$, without loss of generality, set $j_1>j_2$. Apply the projection formula for $(\mu_{j_2+1}\circ...\circ \mu_l)$:
        \begin{align}
            \begin{split}
                &\mu^*C_{i_1,j_1} \cdot \mu^*C_{i_2,j_2} \\
                =& (\mu_{j_2+1}\circ...\circ \mu_l)^* C_{i_1,j_1} \cdot (\mu_{j_2+1}\circ...\circ \mu_l)^*C_{i_2,j_2}\\
                =& C_{j_1} \cdot (\mu_{j_2+1} \circ \cdots \circ \mu_{j_1})^* C_{i_1,j_2} = 0
            \end{split}
        \end{align}
        
        The last equality holds because $C_{i_1,j_1}$ is the exceptional divisor of $\mu_{j_1}$ and $(\mu_{j_2+1} \circ \cdots \circ \mu_{j_1})^*C_{i_1,j_2}$ is the pull back of a divisor by $\mu_{j_1}$.
        
        \textbf{(4)} By the projection formula for $(\mu_{j+1}\circ...\circ\mu_l)$, we have:
        \[\mu^*L\cdot \mu^*C_{i,j}=\mu^*L\cdot (\mu_{j+1}\circ...\circ\mu_l)^*C_{i,j}=(\mu_1\circ ... \circ \mu_{j})^*L \cdot C_{i,j}\].
        The last term equals zero because $C_{i,j}$ is the exceptional divisor of $\mu_j$ and $(\mu_{j+1}\circ...\circ\mu_l)^*L$ is the pull back of a divisor by $\mu_j$.
        
        \textbf{(5)} 
        For convenience, we denote $C_{i,j}=0$, for all $l(i)+1\leq j\leq l$. 
        Note that 
        for $j = k+1$, one has
        $ -K_{S_{k+1}}= \mu_{k+1}^*(-K_{S_{k}}) - \sum\limits_{i=1}^r C_{i,k+1} $.
        Together with the formula $-K_{S_{1}} = \mu_{1}^*(-K_{S'}) - \sum\limits_{i=1}^r C_{i,1}$,
        This implies (5) by induction.
        
        \textbf{(6)} Since $\operatorname{NS}(S)$ is generated by $\mu^*\operatorname{NS}(S')$ and $\{E_{i,j}|1\leq i\leq r, 1\leq j\leq l(i)\}$, it has dimension $k+\sum\limits_{s=1}^rl(i)$, where $k=\operatorname{dim}\operatorname{NS}(S')$. Take a basis $L_1,...,L_k$ of $\operatorname{NS}(S')$. Suppose:
        \[
        \sum_{p=1}^kb_p \mu^*L_p + \sum_{s=1}^r\sum_{t=1}^{l(s)} a_{s,t} \mu^*C_{s,t} \simeq 0.
        \]
        Intersecting with $\mu^*C_{i,j}$ and using $(2),(3)$, we obtain:
        \[
        a_{i,j} (\mu^*C_{i,j})^2 + \sum_{(s,t) \neq (i,j)} a_{s,t} (\mu^*C_{s,t} \cdot \mu^*C_{i,j}) = -a_{i,j} = 0 \implies a_{i,j} = 0.
        \]
        This implies $\sum_{p=1}^kb_p L_p=\mu(\sum_{p=1}^kb_p \mu^*L_p)\simeq 0$. Since $L_1,...,L_k$ forms a basis of $\operatorname{NS}(S')$, we have $b_p=0$ for all $1\leq p\leq k$.
        Thus the divisors are linearly independent and form a basis of $\operatorname{NS}(S)$.
    \end{proof}
\end{proposition}
Blowups along infinitely near points will play a central role in this paper. This leads to the following definition.
\begin{definition}\label{general-belong}
With the notation as in Notation~\ref{notation:C_i,j}, let $D\subset S'$ be a smooth curve.
We say that $D$ passes through $p_{i,j}\in S_{j-1}$ in the generalized sense if
\[
\mu^*D - \sum_{t=1}^j \mu^*C_{i,t}
\]
is effective. In this case, we write $p_{i,j}\in_{gen} D$. Otherwise, we write $p_{i,j}\not\in_{gen}D$.
\end{definition}
From this definition, it follows that if $p_{i,j}\in_{gen} D$, then $p_{i,t}\in_{gen} D$ for all $1\leq t\leq j$.
\begin{proposition}\label{2nd def of belonging}
    Using the notation of Definition \ref{general-belong}, denote the strict transform of $D$ in $S$ by $\hat{D}$ and the strict transform of $D$ in $S_{j}$ by $\hat{D}_{j}$. Then\\
    (1) $\mu^*D=\hat{D}+\sum\limits_{p_{i,j} \in_{gen} D }\mu^*C_{i,j}$, and\\
    (2) $p_{i,j}\in_{gen}D$ if and only if $p_{i,j} \in \hat{D}_{j-1}$ on $S_{j-1}$.
    
\begin{proof}
 If the statement is true on each open set $U_i=S-\bigcup\limits_{s\neq i}\mu^{-1}(p_{s,1})$ with $1\leq i\leq r$, then it is true on $S$. So it suffices to treat the case $r=1$.
 
For all $1\leq t\leq j$, we have $\mu_{t}\circ...\circ \mu_{j-1}(p_{1,j})=p_{1,t}$ and $\mu_{t}\circ...\circ \mu_{j-1}(\hat{D}_{1,j})=\hat{D}_{t}$. So if $p_{1,j}\in \hat{D}_{1,j}$ on $S_{j-1}$, then $p_{1,t}\in \hat{D}_{t}$ with $1\leq t\leq j$. Take $k$ to be the largest integer such that $p_{1,j}\in\ \hat{D}_{j}$. 

Since $D$ is smooth, we have:
\begin{equation}\label{def:2nd def of belonging-1}
\mu_{t}^*\hat{D}_{t} = 
\begin{cases}
\hat{D}_{t+1} + C_{1,t} & \text{if } 1\leq t\leq k \\
\hat{D}_{t+1} & \text{if } k+1\leq t \leq l(1)
\end{cases}
\end{equation}
Pulling back the equation (\ref{def:2nd def of belonging-1}) by $\mu_{t+1}\circ...\circ \mu_{l}$ for $1\leq t\leq k$, we have 
\begin{align}\label{2nd def of belonging-2}
    (\mu_{t}\circ \mu_{t+1}\circ...\circ \mu_{l})^*\hat{D}_{t}=(\mu_{t+1}\circ...\circ \mu_{l})^*\hat{D}_{t+1}+\mu^*C_{1,t}
\end{align}
By definition, we have $D=\hat{D}_{1}$. Sum formula (\ref{2nd def of belonging-2}) up for $1\leq t\leq k$ to get:
\[\mu^*D=\mu^*\hat{D}_{1}=(\mu_{1}\circ...\circ \mu_{l})^*\hat{D}_{1}=(\mu_{k+1}\circ...\circ \mu_{l})^*\hat{D}_{k+1}+\mu^*C_{1,1}+\cdots+\mu^*C_{1,k}\]
By the maximality of $k$, for each $t\geq k+1$, we have $p_{1,t}\not\in \hat{D}_{1,t}$. This shows that $\mu_{t}^*\hat{D}_{t}=\hat{D}_{t+1}$.
So $(\mu_{k+1}\circ...\circ \mu_{l})^*\hat{D}_{k+1}=\hat{D}_l=\hat{D}$ and 
\begin{align}\label{eq:2nd def of belonging-3}
    \mu^*D=(\mu_{1}\circ...\circ \mu_{l})^*\hat{D}_{1}=\hat{D}+\mu^*C_{1,1}+\cdots+\mu^*C_{1,k}=\hat{D}+\sum_{ \{t| p_{1,t}\in \hat{D}_{t}\}} \mu^*C_{1,t}  
\end{align}

If $p_{1,j}\in_{gen}D$, then $p_{1,t}\in_{gen}D$ for $1\leq t\leq j$. Take $k'$ to be the largest integer such that $p_{i,j}\in_{gen}D$. So to prove (1) and (2), we only have to show $k=k'$. Yet it is immediate by Definition \ref{general-belong} and \eqref{eq:2nd def of belonging-3}.

\end{proof}
\end{proposition}

\begin{lemma}\label{positive}
    Let $s_i$ are integers such that $1\leq s_i \leq l(i)$ and let $j_{i,t} $ are integers such that $1\leq j_{i,1}< ...<j_{i,s_i}\leq l(i)$.  For a smooth curve $D\subset S'$, the following are equivalent:
    \begin{enumerate}[(1)]
        \item $\mu^*D - \sum_{i=1}^r\sum_{t=1}^{s_i} \mu^*C_{i,t}$ is effective;
        \item $\mu^*D - \sum_{i=1}^r\sum_{t=1}^{s_i} \mu^*C_{i,j_{i,t}}$ is effective;
        \item $\mu^*D - \sum_{i=1}^r s_i \mu^*C_{i,\ell(i)}$ is effective.
    \end{enumerate}
    \begin{proof}
    We only need to prove this for $r=1$. Denote $s_1=s$ and $j_{1,t}=j_t$.
        By Proposition~\ref{Pro:!}(1), we have
\[
\mu^*C_{1,1}\ge \mu^*C_{1,2}\ge \cdots \ge \mu^*C_{1,l(1)}.
\]
Hence
\[
\sum_{t=1}^s\mu^*C_{1,t}\ge \sum_{t=1}^s\mu^*C_{1,j_t}\ge s\,\mu^*C_{1,l(1)},
\]
which immediately gives $(1)\Rightarrow(2)\Rightarrow(3).$

By Proposition \ref{2nd def of belonging}, we have $\mu^*D=\hat{D}+\sum_{p_{1,j}\in_{gen}D}\mu^*C_{1,j}$, 
        where $\hat{D}$ is the strict transform of $D$ by $\mu$. Let $k$ be the largest index such that $p_{1,j}\in_{gen}D$. Then by Proposition \ref{Pro:!}(1), we have
        \[\mu^*D=\hat{D}+\sum_{1\leq j\leq k}\mu^*C_{1,j}=\hat{D}+\sum_{j=1}^{k}jE_{1,j}+\sum_{j=k+1}^{l(1)}kE_{1,j}\]
        By assumption $\mu^*D-s\mu^*C_{1,l(1)}$ is effective, so the divisor
        \[\hat{D}+\sum_{j=1}^{k}jE_{1,j}+\sum_{j=k+1}^{l(1)-1}kE_{1,j}+(k-s)E_{1,l(i)}\]
        is effective. So $k\geq s$. This shows that $p_{1,s}\in_{gen}D$. This proves $(3)\Rightarrow(1)$.
    \end{proof}
\end{lemma}

\begin{definition}\label{minimal}
   Let $A := \{p_{i,j} \mid 1 \leq i \leq r,\ 1 \leq j \leq l(i) \}$, and let $B$ be a subset of $A$. For every $1\leq k\leq r$, set $B_k:=\{p_{k,j}\in B|1\leq j\leq l(k)\}$ and $b_k:=\#\{B_k\}$. Define $\operatorname{Min}(B)$, the minimization of $B$, as follows:
   \[
   \operatorname{Min}(B):=\bigcup_{1\leq k\leq r} \{p_{k,1},...,p_{k,b_k}\}
   \]
   Then we say $B$ is minimal if $B=\operatorname{Min}(B)$.
\end{definition}

\begin{lemma}\label{lemma:passes at least}
Let $B:=\{p_{i_t,j_t}\mid 1\le t\le s\}$ be a subset of
$A := \{p_{i,j} \mid 1 \le i \le r,\ 1 \le j \le \ell(i)\}$.
\begin{enumerate}[(1)]
\item If $\mu^*D-\sum_{t=1}^s \mu^*C_{i_t,j_t}$ is effective, then $D$ passes through
the points in $\operatorname{Min}(B)$ in the generalized sense. 
\item[(2)] If there exists a curve $D\subset S'$ such that $D$ passes through the points in $B$ in the generalized sense and $D$ does not pass through the points in $A-B$, then $B$ is minimal.
\end{enumerate}
\end{lemma}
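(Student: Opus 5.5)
The plan is to reduce both parts to one chain at a time and then apply Lemma \ref{positive} and Proposition \ref{2nd def of belonging}.

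\textbf{Part (1).} Fix a chain index $k$. Let $B_k=\{p_{k,j_{k,1}},\dots,p_{k,j_{k,b_k}}\}$ with $j_{k,1}<\dots<j_{k,b_k}$. The condition in (1) says that
\[
\mu^*D-\sum_{k=1}^r\sum_{t=1}^{b_k}\mu^*C_{k,j_{k,t}}
\]
is effective. Two cases need a remark.
\begin{itemize}
\item If $b_k=0$, the chain contributes nothing to $\operatorname{Min}(B)$ and can be ignored.
\item If some $b_k$ vanish, the lemma is still applied, just to the remaining chains.
\end{itemize}
This is a harmless extension of Lemma \ref{positive}, because its proof works chain by chain.

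The implication (2)$\Rightarrow$(1) of Lemma \ref{positive} then gives that
\[
\mu^*D-\sum_k\sum_{t=1}^{b_k}\mu^*C_{k,t}
\]
is effective. Restricting to the chain $k$ (working on the open set $U_k$ as in the proof of Proposition \ref{2nd def of belonging}), the divisor $\mu^*D-\sum_{t=1}^{b_k}\mu^*C_{k,t}$ is effective. This is because the removed terms for other chains are effective and supported away from chain $k$. Since $\mu^*C_{k,t}\ge0$, this yields $\mu^*D-\sum_{t=1}^{b_k}\mu^*C_{k,t}\ge0$ globally. By Definition \ref{general-belong}, $p_{k,b_k}\in_{gen}D$, and hence $p_{k,t}\in_{gen}D$ for all $t\le b_k$. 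These points are exactly $\operatorname{Min}(B)\cap$ chain $k$.

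The main care point is the passage from the global effectivity to effectivity of each single-chain piece. I would do it cleanly with the decomposition of Proposition \ref{2nd def of belonging}(1):
\[
\mu^*D=\hat D+\sum_{p_{i,j}\in_{gen}D}\mu^*C_{i,j}.
\]
Together with Proposition \ref{Pro:!}(1), this writes $\mu^*D$ explicitly in terms of the $E_{i,j}$. The comparison is then a coefficient comparison on each $E_{k,j}$, exactly as in the proof of (3)$\Rightarrow$(1) of Lemma \ref{positive}.

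\textbf{Part (2).} Fix $k$, and suppose $p_{k,j}\in B$, so $p_{k,j}\in_{gen}D$. By the remark after Definition \ref{general-belong}, every $p_{k,t}$ with $t\le j$ also satisfies $p_{k,t}\in_{gen}D$. By hypothesis $D$ does not pass through any point of $A-B$, so all these $p_{k,t}$ lie in $B$. Hence $B_k$ is closed under going down the chain, so $B_k=\{p_{k,1},\dots,p_{k,b_k}\}$ and $B=\operatorname{Min}(B)$.
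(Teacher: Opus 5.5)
Your proposal is correct and follows the paper's proof. In (1) you apply Lemma \ref{positive} (2)$\Rightarrow$(1) and then Definition \ref{general-belong}; the paper reduces to $r=1$ first, which also disposes of chains with $b_k=0$. In (2) you use that $\{p_{k,j}\mid p_{k,j}\in_{gen}D\}$ is closed downward along each chain, which the paper phrases as matching $\#B$ against the maximal index $k$.

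The coefficient-comparison detour you flag as the "main care point" is unnecessary. The divisor $\mu^*D-\sum_{t\le b_k}\mu^*C_{k,t}$ is simply the global effective divisor plus the effective divisors $\mu^*C_{k',t}$ from the other chains.
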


\begin{proof}
This is a local problem, so we only need to prove it on each open set $U_i=S-\bigcup\limits_{s\neq i}\mu^{-1}(p_{s,1})$ with $1\leq i\leq r$. Therefore without loss of generality assume $r=1$. 

(1) By Lemma \ref{positive}, we know that
\begin{align*}
    (\mu^*D-\sum_{t=1}^s \mu^*C_{1,j_t}) \text{ is effective} \Rightarrow (\mu^*D-\sum_{t=1}^s \mu^*C_{1,t}) \text{ is effective}
\end{align*}
So by Definition \ref{general-belong}, the curve $D$ passes through the points in $\operatorname{Min}(B)$ in the generalized sense.

(2) Denote $b:=\#B$. Let $k$ be the maximal index such that $p_{1,j}\in_{gen}D$. Then $D$ passes through at least $k$ points in the generalized sense. On the other hand, the curve $D$ passes through the points in $B$ exactly. So we have $b=k$. By Definition \ref{minimal}, this means $B$ is minimal.
\end{proof}

\begin{thm}\label{thm:decomposition-mu}
    Take $A$ as in Definition \ref{minimal}. Let $B\subset A$ be a minimal subset. Then we can factor $\mu$ as $\gamma_1\circ \gamma_2$, such that $\gamma_1$ is the blow up of the points in $B$ and $\gamma_2$ is the blow up of points in $A-B$.
    \begin{proof}
        It suffices to treat the case for $r=1$. Then $\mu$ is the successive blow up of points $p_{1,1},...,p_{1,l}$. Since $B$ is minimal, $B=\{p_{1,1}...,p_{1,b}\}$. So $\gamma_1$ is the blow up of $p_{1,1},...,p_{1,b}$ and $\gamma_2$ is the blow up of $p_{1,b+1},...,p_{1,l}$. 
    \end{proof}
\end{thm}

\begin{definition}\label{def:simple surface}
    We say that a smooth projective surface is simple if there exists a simple birational morphism $\mu:S\rightarrow \mathbb{P}^2$.
\end{definition}

\begin{thm}
    Let $S$ be a smooth projective rational surface with $-K_S$ nef. Then $S$ is a simple surface if $S$ is not isomorphic to $\mathbb{P}^1\times \mathbb{P}^1$ or $\mathbb{F}_2$.
    \begin{proof}
        It follows from the classical classification result of smooth projective surfaces with anti-canonical bundle nef, see more details in \cite{zbMATH02148824}.
     \end{proof}
\end{thm}
In particular, all the weak Del Pezzo surfaces except $\mathbb{F}_2$ and $\mathbb{P}^1\times \mathbb{P}^1$ are simple surfaces. 
\begin{definition}\label{def:Colinear}
    On a simple surface $S$, we say three points $p_{i_1,j_1},p_{i_2,j_2},p_{i_3,j_3}$ are colinear in the generalized sense if the divisor class $\mu^*H-\mu^*C_{i_1,j_1}-\mu^*C_{i_2,j_2}-\mu^*C_{i_3,j_3}$ is effective.
\end{definition}
\begin{rk}
    If there exists a line which passes through $p_{i_1,j_1},p_{i_2,j_2},p_{i_3,j_3}$ in the generalized sense, then these three points are colinear in the generalized sense and minimal.
    
    Vice versa, if $p_{i_1,j_1},p_{i_2,j_2},p_{i_3,j_3}$ are colinear in the generalized sense, then there exists a line which passes through $\operatorname{Min}\{p_{i_1,j_1},p_{i_2,j_2},p_{i_3,j_3}\}$ in the generalized sense.
\end{rk}
\section{Conic bundle structures on simple surfaces}

\begin{definition}\label{setup:Conic bundle structure}
    Let $S$ be a simple surface and let $f:S\rightarrow \mathbb{P}^1$ be a fibration. 
    Denote a general fibre of $f$ by $F$. Then we say $f$ is a conic bundle structure if 
    \[-K_S.F=2\]
    Moreover, since $S$ is a simple surface, there exists a birational morphism $\mu:S\rightarrow \mathbb{P}^2$. We say $f$ is of degree $d$ if 
    \[F.\mu^*H=d\]
    where $H$ is the hyperplane class in $\mathbb{P}^2$.
\end{definition}

\begin{lemma}\label{effective}
In the situation of Setup~\ref{setup:Conic bundle structure},  
the divisor class
\[
k\mu^*H-\sum_{t=1}^{2k}\mu^*C_{i_t,j_t}
\]
is effective for all \(k\ge 1\).
\end{lemma}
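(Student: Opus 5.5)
The plan is to bound $h^0$ from below by a parameter count: start with $h^0(\mathbb{P}^2,\mathcal{O}(k))=\frac{(k+1)(k+2)}{2}$ and show that each subtracted class $\mu^*C_{i_t,j_t}$ costs at most one dimension. I will assume, as the statement implicitly intends, that the pairs $(i_t,j_t)$ are pairwise distinct, i.e.\ we subtract $2k$ distinct (possibly infinitely near) points. Then
\[
h^0\Bigl(S,k\mu^*H-\sum_{t=1}^{2k}\mu^*C_{i_t,j_t}\Bigr)\ \ge\ \frac{(k+1)(k+2)}{2}-2k=\frac{k^2-k+2}{2}\ \ge 1
\]
for every $k\ge1$, which gives effectivity.

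To control the loss, I use the factorization $\mu=\mu_1\circ\cdots\circ\mu_l$ from Proposition \ref{graph} and process one level at a time. Let $T_j\subset A_j$ be the set of $i$ such that $(i,j)$ occurs among the $(i_t,j_t)$. Define divisors $M_j$ on $S_j$ inductively by
\[
M_0=kH \text{ on } \mathbb{P}^2,\qquad M_j=\mu_j^*M_{j-1}-\sum_{i\in T_j}C_{i,j}.
\]
By Proposition \ref{Pro:!}(1), pulling $M_l$ back along the remaining blow-ups gives exactly $k\mu^*H-\sum_t\mu^*C_{i_t,j_t}$ on $S=S_l$. Since pullback preserves $h^0$, it suffices to prove $h^0(S_l,M_l)\ge h^0(\mathbb{P}^2,kH)-2k$.

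For one step, set $N=\mu_j^*M_{j-1}$. The curves $C_{i,j}$, $i\in T_j$, are disjoint exceptional curves of $\mu_j$. Restricting gives the exact sequence
\[
0\to\mathcal{O}_{S_j}\Bigl(N-\sum_{i\in T_j}C_{i,j}\Bigr)\to\mathcal{O}_{S_j}(N)\to\bigoplus_{i\in T_j}\mathcal{O}_{C_{i,j}}(N)\to0 .
\]
Since $N$ is pulled back from $S_{j-1}$, each $\mathcal{O}_{C_{i,j}}(N)$ is trivial and has $h^0=1$. Therefore
\[
h^0(M_j)\ge h^0(N)-\#T_j=h^0(M_{j-1})-\#T_j,
\]
using the projection formula $h^0(S_j,\mu_j^*M_{j-1})=h^0(S_{j-1},M_{j-1})$. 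Summing over $j$, and using $\sum_j\#T_j=2k$, gives the desired lower bound.

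The only delicate point is bookkeeping. One has to check that the telescoped divisor really equals the given class, which is Proposition \ref{Pro:!}(1). One also has to check that a point $p_{i,j}$ chosen at level $j$ lies on $S_{j-1}$, so that the restriction is taken on a genuine exceptional curve there; this is guaranteed by the factorization in Proposition \ref{graph}(2).

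The genuine obstacle is the distinctness hypothesis. If a pair $(i,j)$ were repeated with multiplicity $m$, subtracting $mC_{i,j}$ could cost up to $m(m+1)/2$ conditions and the count would fail. So I would read the statement as concerning $2k$ distinct points, which is how it is used for conic bundle fibres.
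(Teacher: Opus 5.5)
Your proof is correct, but it takes a different route from the paper's.

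\textbf{The paper's route.} The paper first reduces to $k=1$. It writes the class as a sum of $k$ classes $\mu^*H-\mu^*C_{a}-\mu^*C_{b}$, one for each pair of indices. For a single pair, it replaces the two points by their minimization. This is either two distinct points $p_{i,1},p_{i',1}$ of $\mathbb{P}^2$, or a point $p_{i,1}$ together with an infinitely near point $p_{i,2}$. In both cases there is an explicit line $l$ through them in the generalized sense. The monotonicity $\mu^*C_{i,1}\ge \mu^*C_{i,2}\ge\cdots$ from Proposition~\ref{Pro:!}(1) then gives $\mu^*H-\mu^*C_{i_1,j_1}-\mu^*C_{i_2,j_2}\ge \mu^*l-\mu^*C_{i'_1,j'_1}-\mu^*C_{i'_2,j'_2}\ge 0$.

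\textbf{What your route buys.} Your level-by-level exact sequences replace this geometric construction with a uniform parameter count. It avoids the generalized-sense and $\operatorname{Min}$ machinery entirely. It also gives the quantitative bound $h^0\ge (k^2-k+2)/2$, for instance at least a pencil when $k=2$.

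\textbf{What the paper's route buys.} Your count needs all $2k$ indices to be distinct and produces no explicit member of the linear system. The paper's argument exhibits a member as a sum of $k$ strict transforms of lines plus exceptional curves. This is exactly the decomposition $P\simeq P_1+P_2$ exploited in the proof of Proposition~\ref{S_4}. It also tolerates repeated indices whenever the $2k$ indices can be split into $k$ pairs of distinct ones. For example, $2\mu^*H-2\mu^*C_{1,1}-2\mu^*C_{2,1}$ is twice a line class, while your count only yields $h^0\ge 0$ there.

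Your distinctness reading is the right one. The paper's $k=1$ step also silently needs the two indices to differ, since it takes $\operatorname{Min}$ of a two-element set. Every application of the lemma in the paper satisfies this.
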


\begin{proof}
It suffices to prove the statement for \(k=1\). Let \((i_1,j_1)\) and \((i_2,j_2)\) be two indices.
Let $\{p_{i'_1,j'_1},p_{i'_2,j'_2}\}:=\operatorname{Min}\{p_{i_1,j_1},p_{i_2,j_2}\}$. Then either
\[
\{(i'_1,j'_1),(i'_2,j'_2)\}=\{(i'_1,1),(i'_1,2)\} \text{ or } \{(i'_1,1),(i'_2,1)\}
\]
In both cases, there exists a line \(l\subset\mathbb P^2\) passing through $p_{i'_1,j'_1}$ and $p_{i'_2,j'_2}$ in the generalized sense. Thus, by Lemma \ref{lemma:passes at least} and Proposition \ref{Pro:!} (1), we have 
\[
\mu^*H-\mu^*C_{i_1,j_1}-\mu^*C_{i_2,j_2}\geq \mu^*l-\mu^*C_{i'_1,j'_1}-\mu^*C_{i'_2,j'_2}\geq 0 
\]
\end{proof}

\begin{proposition}\label{pro:conic bundles looks like}
    Let $S$ be a simple surface and $f:S\rightarrow \mathbb{P}^1$ be a conic bundle structure of degree $d$. Then $f$ is induced by a linear system 
    \[
    |d\mu^*H-\sum_{t=1}^{d^2}\mu^*C_{i_t,j_t}|
    \]
    \begin{proof}
        Fix a general fibre $F$ of $f$. By Proposition \ref{Pro:!} (5), we can write $F_0$ as 
        \begin{align}\label{eq:conic bundles looks like-1}
            F\simeq a\mu^*H+\sum_{1\leq i\leq r}\sum_{1\leq j\leq l(i)}a_{i,j}\mu^*C_{i,j}
        \end{align}
        where $a,a_{i,j}$ are integers, for all $1\leq i\leq r$, $1\leq j\leq l(i)$. Since $f$ is degree $d$, by Proposition \ref{Pro:!} (2)(3), we have $a=F_0.\mu^*H=d$.
        
        Define $C:=\mu(F)\subset \mathbb{P}^2$ and $\hat{C}$ to be the strict transform of $C$ by $\mu$. Then $F=\hat{C}$, since $F\subset \mu^{-1}C$ and $F_0$ is irreducible. By Proposition \ref{2nd def of belonging} (1), we have 
        \begin{align}\label{eq:conic bundles looks like-2}
            F_0=\hat{C}=\mu^*C-\sum_{p_{i,j}\in_{gen} C }\mu^*C_{i,j} 
            \simeq d\mu^*H-\sum_{p_{i,j}\in_{gen} C }\mu^*C_{i,j}
        \end{align}
        Comparing the formulas \eqref{eq:conic bundles looks like-1} and \eqref{eq:conic bundles looks like-2}, we have $a_{i,j}\in \{-1,0\}$, for all $1\leq i\leq r$, $1\leq j\leq l(i)$.

        Recall that $F$ is a fibre, thus $F^2=0$. By Proposition \ref{Pro:!} (2)(3), we have 
        \[
        0=(F)^2=(a\mu^*H+\sum_{1\leq i\leq r}\sum_{1\leq j\leq l(i)}a_{i,j}\mu^*C_{i,j})^2=d^2-\sum_{i,j}a_{i,j}^2
        \]
        Thus, there are exactly $d^2$ elements of $\{a_{i,j}\}$ equal to $-1$ and the others are zero. 
    \end{proof}
\end{proposition}

\subsection{Conic bundle structures of degree 1}

\begin{proposition}\label{conic 1}
    Let $S$ be a simple surface. Using the notation from Section 3, for every $i\in \{1,...,r\}$, the linear system $|\mu^*H-\mu^*C_{i,1}|$ defines a conic bundle structure of degree 1:
    \[f_{i}:S\rightarrow \mathbb{P}^1\]
    The general fibre is the strict transform of a line passing through $p_{i,1}$. 
    \begin{proof}
    The birational morphism $\mu$ can be factored in a commutative diagram:
    \begin{center}
    \vspace{0.5em}
    $\xymatrix{
    S \ar[d]^{\tau} \ar[dr]^{\mu} & \\
    \mathbb{F}_1 \ar[r]_{\gamma} & \mathbb{P}^2 
}$
\end{center}
where $\gamma$ represents the blow up of $p_{i,1}$. 
     It is well known that $\mathbb{F}_1$ has a conic bundle structure $f'_{i}:\mathbb{F}_1\rightarrow \mathbb{P}^1$ and a general fibre is induced by a line on $\mathbb{P}^2$ which passes through $p_{i,1}$. The composition $f_{i}:=f'_{i}\circ \tau:S\rightarrow \mathbb{P}^1$ is a conic bundle structure and the general fibre is the strict transform of a line passing through $p_{i,1}$.

    \end{proof}
\end{proposition}

\begin{rk}
By Proposition \ref{pro:conic bundles looks like} and Proposition \ref{conic 1}, let $r$ be the number defined in Proposition \ref{graph}. Then there are exactly $r$ conic bundle structures of degree 1.
\end{rk}

\subsection{Good position and conic bundle structures of degree~2}
Configurations of weak Del Pezzo surfaces of degree 4 are classified, see \cite[Figure 2]{arXiv:2408.14411}. For overview, here we introduce a notation that is convenient for our construction:
\begin{definition}\label{def:good position}
    On a simple surface $S$, we say four points $p_{i_1,j_1},p_{i_2,j_2},p_{i_3,j_3},p_{i_4,j_4}$ are in good position if none of three of them are colinear in the generalized sense. (For the notation colinear in the generalized sense, see Definition \ref{def:Colinear}.)
\end{definition}

\begin{lemma}\label{lemma:GP:B,MB}
    Let $B=\{p_{i_1,j_1}, p_{i_2,j_2}, p_{i_3,j_3}, p_{i_4,j_4}\}$ be a subset of $A$, then the following two statements are equivalent: 
    \begin{enumerate}[(1)]
        \item The points in $B$ are in good position 
        \item The points in $\operatorname{Min}(B)$ are in good position 
    \end{enumerate}
    \begin{proof}
        By Lemma \ref{positive}, for any $1\leq j_1'\leq l(i_1)$ such that $(i_1,j_1')\neq (i_2,j_2),(i_3,j_3)$, the effectiveness of 
        \[
        \mu^*H-\mu^*C_{i_1,j_1}-\mu^*C_{i_2,j_2}-\mu^*C_{i_3,j_3}
        \]
        is equivalent to the effectiveness of 
        \[
        \mu^*H-\mu^*C_{i_1,j_1'}-\mu^*C_{i_2,j_2}-\mu^*C_{i_3,j_3}
        \]
        So we can change the second index of each point without changing the property "colinear 
        in the generalized sense". As a result, it does not change the property "in good position".
        Then the lemma follows from the definition of $\operatorname{Min}(B)$ in Definition \ref{minimal}.
    \end{proof}
\end{lemma}

\begin{lemma}\label{lemma:judge-good-position}
Let $S$ be a simple surface. If the linear system
\[
\left|2\mu^*H-\mu^*C_{i_1,j_1}-\mu^*C_{i_2,j_2}-\mu^*C_{i_3,j_3}-\mu^*C_{i_4,j_4}\right|
\]
is base point free, the points
\[
p_{i_1,j_1},\ p_{i_2,j_2},\ p_{i_3,j_3},\ p_{i_4,j_4}
\]
are in good position.
\begin{proof}
Set
\[
L:=2\mu^*H-\mu^*C_{i_1,j_1}-\mu^*C_{i_2,j_2}-\mu^*C_{i_3,j_3}-\mu^*C_{i_4,j_4}.
\]

For every choice of three distinct indices $a,b,c\in\{1,2,3,4\}$, if the divisor
\[
\mu^*H-\mu^*C_{i_a,j_a}-\mu^*C_{i_b,j_b}-\mu^*C_{i_c,j_c}
\]
is effective, then every divisor in $|L|$ meets it negatively. This contradicts the assumption that $|L|$ is nef. Therefore no three of the four points are colinear in the generalized sense, and hence they are in good position.
\end{proof}
\end{lemma}

\begin{proposition}\label{S_4}
    Let $S$ be a weak Del Pezzo surface of degree 5. Then the following are equivalent:\\
    (1) There exists a conic bundle structure $g:S\rightarrow \mathbb{P}^1$ of degree 2\\
    (2) The four points $p_{i_1,j_1},p_{i_2,j_2},p_{i_3,j_3},p_{i_4,j_4}$ are in good position. \\
    Moreover, the following holds: if $F$ is a fibre of $g$, such that $\mu(F)$ is a smooth conic in $\mathbb{P}^2$, then $F$ is a smooth fibre.
    \begin{proof}
        $(1) \Rightarrow (2)$: By Proposition~\ref{pro:conic bundles looks like}, the fibration $g$ is induced by the base-point-free linear system
\[
P:=\left|2\mu^*H-\mu^*C_{i_1,j_1}-\mu^*C_{i_2,j_2}-\mu^*C_{i_3,j_3}-\mu^*C_{i_4,j_4}\right|.
\]
Then (2) follows by Lemma \ref{lemma:judge-good-position}.

        $(2) \Rightarrow (1)$: By Lemma \ref{effective}, the divisor $(2\mu^*H-\mu^*C_{i_1,j_1}-\mu^*C_{i_2,j_2}-\mu^*C_{i_3,j_3}-\mu^*C_{i_4,j_4})$ is effective, so we can define a linear system 
        \[P:=|2\mu^*H-\mu^*C_{i_1,j_1}-\mu^*C_{i_2,j_2}-\mu^*C_{i_3,j_3}-\mu^*C_{i_4,j_4}|\]

        \emph{We first prove that $P$ does not have a fixed part.}
        Denote the fixed part of $P$ by $P_F$. Set $P_1:=|\mu^*H-\mu^*C_{i_1,j_1}-\mu^*C_{i_2,j_2}|$ and $P_2:=|\mu^*H-\mu^*C_{i_3,j_3}-\mu^*C_{i_4,j_4}|$. By Lemma \ref{effective}, we know that $P_1,P_2$ are effective. Thus $P_1,P_2$ defines two lines on $\mathbb{P}^2$, i.e. $l_1:=\mu(P_1)$ and $l_2:=\mu(P_2)$.
        Denote the strict transform of $l_1,l_2$ by $\mu$ by $\hat{l}_1,\hat{l}_2$. Since $P\simeq P_1+P_2$, we see that $P_F$ is contained in the union of $\hat{l}_1,\hat{l}_2$ and the exceptional curves. 

        Suppose that $\hat{l}_1\subset P_F$. Since the four points are in good position, $l_1$ passes through at most 2 points in the generalized sense. By definition of $l_1$, 
        \[\mu^*l_1-\mu^*C_{i_1,j_1}-\mu^*C_{i_2,j_2}\]
        is effective. So by Lemma \ref{lemma:passes at least}, $l_1$ passes through at least 2 points. This implies $l_1$ passes through two points exactly, say
$p_{i'_1,j'_1}$ and $p_{i'_2,j'_2}$, with complementary points
$p_{i'_3,j'_3}$ and $p_{i'_4,j'_4}$.
        By Proposition \ref{2nd def of belonging}, we have
        \[
        \hat{l}_1\simeq \mu^*H-\mu^*C_{i'_1,j'_1}-\mu^*C_{i'_2,j'_2}
        \]
        This implies that the movable part is contained in
        \[
        P-\hat{l}_1=\mu^*H-\mu^*C_{i'_3,j'_3}-\mu^*C_{i'_4,j'_4}
        \]
        Take an arbitrary element $M\in |P-\hat{l}_1|$. By Lemma \ref{lemma:passes at least}, the line $\mu(M)$ passes through two points $\operatorname{Min}\{p_{i_3',j_3'},p_{i_4',j_4'}\}$. Thus, the linear system $|P-\hat{l}_1|$ is not movable, 
        We get a contradiction. So $\hat{l}_1 \not\subset P_F$ and for the same reason,  $\hat{l}_2\not\subset P_F$. This implies that $P_F$ is contained in the exceptional curves.

        Then we can take $M$ as a general element of the movable part of $P$ which is irreducible. Define $C:=\mu(M)$, then $C$ is a smooth conic in $\mathbb{P}^2$. Denote the strict transform of $C$ in $S$ by $\hat{C}$. Then we have $M=\hat{C}$ and $P-\hat{C}$ is effective. 

        By Proposition \ref{2nd def of belonging}, we have 
        \begin{align*}
            \begin{split}
                \hat{C}
                \simeq& 2\mu^*H-\sum_{p_{i,j}\in_{gen} C }\mu^*C_{i,j}
            \end{split}
        \end{align*}
        Thus, 
        \begin{align*}
            \begin{split}
                P-\hat{C}\simeq &(2\mu^*H-\mu^*C_{i_1,j_1}-\mu^*C_{i_2,j_2}-\mu^*C_{i_3,j_3}-\mu^*C_{i_4,j_4})\\
                -&(2\mu^*H-\sum_{p_{i,j}\in_{gen} C}\mu^*C_{i,j})\\
                =& -(\sum_{p_{i,j}\not\in_{gen} C}\mu^*C_{i,j})
            \end{split}
        \end{align*}
        So it follows that $C$ passes through four points $p_{i_1,j_1},p_{i_2,j_2},p_{i_3,j_3},p_{i_4,j_4}$ in the generalized sense and $P\simeq \hat{C}$.
        This shows that linear system $P$ does not have a fixed part.

        By Proposition \ref{Pro:!}, we have:
        \[P^2=(2\mu^*H-\mu^*C_{i_1,j_1}-\mu^*C_{i_2,j_2}-\mu^*C_{i_3,j_3}-\mu^*C_{i_4,j_4})^2=0\]
        Since $P$ has no fixed part, we can choose $C_1,C_2$ in $P$ such that they have no common components.The formula $P^2=0$ implies that they are disjoint. Thus, $P$ is base point free. 

        So we have a morphism $f:S\rightarrow \mathbb{P}^k$. 
        Since $P^2=0$, we have $k=1$ and the fibration $f$ is a conic bundle structure of degree 2.
        In addition, for each singular fibre $F_{\alpha}$ of $g$, if the image $\mu(F_{\alpha})$ is a conic $C$, then $F_{\alpha}=\hat{C}$ is irreducible. In particular, $F_{\alpha}$ is smooth, we get a contradiction.
    \end{proof}
\end{proposition}
 
\begin{rk}
    In Proposition \ref{S_4}, since $S$ has degree 5, we only blow up 4 points. Thus, the four points are automatically minimal.
\end{rk}
\begin{proposition}\label{pro: conic 2}
    For a simple surface $S$, there exists a bijection between the following two sets:
    \begin{enumerate}[(1)]
        \item A conic bundle structure $g:S\rightarrow \mathbb{P}^1$ of degree 2.
        \item Four points $p_{i_1,j_1},p_{i_2,j_2},p_{i_3,j_3},p_{i_4,j_4}$ which are in good position and minimal.    
    \end{enumerate}
    
    \begin{proof}
        $(1)\Rightarrow (2)$: By Proposition \ref{pro:conic bundles looks like}, there exists a linear system $P:=|2\mu^*H-\mu^*C_{i_1,j_1}-\mu^*C_{i_2,j_2}-\mu^*C_{i_3,j_3}-\mu^*C_{i_4,j_4}|$ which induces $g$.
        
        By Lemma \ref{lemma:judge-good-position}, the points $p_{i_1,j_1},p_{i_2,j_2},p_{i_3,j_3},p_{i_4,j_4}$ are in good position. So we only have to show they are minimal. If they are not minimal, then by Definition \ref{minimal}, we can assume that $p_{i_1,j_1}=p_{1,k}$, where $2\leq k\leq l(1)$ and there exists an integer $1\leq k'<k$, such that 
        \[
        \{(i_2,j_2),(i_3,j_3),(i_4,j_4)\}\cap \{(1,k'),...,(1,k-1)\}=\emptyset. 
        \]
        So the divisor $\mu^*C_{1,k'}-\mu^*C_{1,k}$
         is effective and we have: 
        \begin{align*}
            \begin{split}
            &P.(\mu^*C_{1,k'}-\mu^*C_{1,k})\\
                =&(2\mu^*H-\mu^*C_{1,k}-\mu^*C_{i_2,j_2}-\mu^*C_{i_3,j_3}-\mu^*C_{i_4,j_4}).(\mu^*C_{1,k'}-\mu^*C_{1,k})=-1
            \end{split}
        \end{align*}
        This shows that $P$ is not nef and we get a contradiction.
        
        $(2)\Rightarrow (1)$: By Theorem \ref{thm:decomposition-mu}, we can contract all the exceptional curves except $E_{i_1,j_1},E_{i_2,j_2},E_{i_3,j_3},E_{i_4,j_4}$ to get a birational morphism $\gamma_1$ from $S$ to a weak Del Pezzo surface of degree 5 which we denote by $S_4$. By Proposition \ref{S_4}, we have a conic bundle structure $g':S_4\rightarrow \mathbb{P}^1$ of degree 2 .
        \begin{center}
    \vspace{0.5em}
    $\xymatrix{
    S \ar[r]^{\gamma_1} \ar[dr]_{g}& S_4 \ar[d]^{g'} \ar[r]^{\gamma_2}& \mathbb{P}^2 \\
    &\mathbb{P}^1  & 
}$
\end{center}
In this way, we get a conic bundle structure $g:S\rightarrow \mathbb{P}^1$ of degree 2.

    \end{proof}  
\end{proposition}

\section{Conic bundle structures on weak Del Pezzo surfaces of degree 4}
Now we concentrate on weak Del Pezzo surfaces of degree $4$. Recall that $\pi:\mathbb{P}(T_X)\rightarrow S$ is the natural projection and $\zeta:= \mathcal{O}_{\mathbb{P}(T_X)}(1)$. 
For a fibration $f:S\rightarrow \mathbb{P}^1$, we denote the relative tangent bundle of $f$ by $T_f$.

\subsection{Conic bundle structures of degree 1}

\begin{setup}\label{setup:p1}
    Let $S$ be a weak Del Pezzo surface of degree $4$, and let $f$ be a conic bundle structure of degree 1 induced by the linear system $|\mu^*H-\mu^*C_{1,1}|$ as in Proposition \ref{conic 1}. For the next statements, we use notation in Notation \ref{notation:F_alpha}. Then for every fibre $F$ of $f$, the image $\mu(F)$ passes through $p_{1,1}$ in the generalized sense. We define
\[
\operatorname{P}(F_{\alpha})
:=\{\, p_{i,j}\mid (i,j)\neq (1,1),\; p_{i,j}\in_{gen} F_{\alpha}\,\},
\]
\end{setup}

\begin{lemma}\label{3-1}
    In the situation of Setup \ref{setup:p1}, we have for each $\alpha\in \Delta$:
    \begin{enumerate}[(1)]
        \item $|\operatorname{P}(F_{\alpha})|\geq 1$;
        \item $M(F_{\alpha})\leq |\operatorname{P}(F_{\alpha})|$, where $M(F_{\alpha})$ is defined in Notation \ref{notation:F_alpha} (2);
        \item $M(F_{\alpha})\leq 2$.
    \end{enumerate}
    \begin{proof}
        (1) Take a singular fibre $F_{\alpha}$ of $f$ and let $l_{\alpha}:=\mu(F_{\alpha})$. Denote $\hat{l_{\alpha}}$ to be the strict transform of $l_{\alpha}$ by $\mu$. Thus, 
        \begin{align}\label{3-1-1}
            F_{\alpha}&= \mu^*l_{\alpha}-\mu^*C_{1,1}\nonumber\\
            &= \hat{l_{\alpha}}+\sum\limits_{p_{i,j}\in_{gen} l_{\alpha} }\mu^*C_{i,j}-\mu^*C_{1,1}\nonumber 
            && \text{by Proposition \ref{2nd def of belonging}}\\
            &= \hat{l}_{\alpha}+\sum_{p_{i,j}\in \operatorname{P}(F_{\alpha})} \mu^*C_{i,j}
        \end{align}
        If $|\operatorname{P}(F_{\alpha})|=0$, then $F_{\alpha}=\hat{l}_{\alpha}$ is a smooth curve, a contradiction. So $|\operatorname{P}(F_{\alpha})|\geq 1$.

        (2) Write $F_{\alpha}=\hat{l}_{\alpha}+\sum\limits_{i,j}a_{i,j}E_{i,j}$. 
        By the definition of $M(F_{\alpha})$, we have $M(F_{\alpha})=\max\{1,a_{i,j}\}$.
        By Proposition \ref{Pro:!} (1), 
        \[\mu^*C_{i,j}=E_{i,j}+...+E_{i,l(i)}\]
        Hence, for each $1\leq i,i' \leq r$, $1\leq j\leq l(i)$ and $1\leq j'\leq l(i')$, the exceptional curve $E_{i,j}$ appears at most one time in the decomposition of $\mu^*C_{i',j'}$. 
        
        Therefore, combined with Formula \eqref{3-1-1}, for any $1\leq i\leq r$ and $1\leq j\leq l(i)$, we have $a_{i,j}\leq |\operatorname{P}(F_{\alpha})|$. Thus, $M(F_{\alpha})\leq |\operatorname{P}(F_{\alpha})|$.

        (3) Every smooth irreducible curve has self intersection number larger than $ -3$ on the weak Del Pezzo surface $S$. So $\mu(F_{\alpha})$ passes through at most three points (containing $p_{1,1}$) in the generalized sense. This shows that $|\operatorname{P}(F_{\alpha})|\leq 2$. By (2), we get the conclusion.
    \end{proof}
\end{lemma}

\begin{thm}\label{positivity 1}
    In the situation of Setup \ref{setup:p1}, there exists an effective divisor $D_1\subset \mathbb{P}(T_S)$ such that $D_1\in |\zeta-\pi^*T_{f_1}|$ and $\zeta|_{D_1}$ is pseudo-effective.
    
    \begin{proof}
        By Proposition \ref{Pro:VMRT}, there exists an effective divisor $D_1\in |\zeta-\pi^*T_{f_1}|$. By Proposition \ref{pro:zeta on D}, to prove $\zeta|_{D_1}$ is pseudo-effective, we only need to show that 
        \[\sum\limits_{\alpha\in \Delta} (1-\frac{1}{2M(F_{\alpha})})\leq 2\]
        By Lemma \ref{3-1}, we have $|\Delta|\leq \sum\limits_{\alpha\in \Delta}M(F_{\alpha})\leq \sum\limits_{\alpha\in \Delta}|\operatorname{P}(F_{\alpha})|=4$. Combined with Lemma \ref{3-1}, we can classify all situations according to the multiplicity of singular fibres:
        \begin{enumerate}
            \item[Case 1] We have exactly two singular fibres of type $M(F_{\alpha})=2$ . Then we have zero singular fibres of type $M(F_{\alpha})=1$ and $\sum\limits_{\alpha\in \Delta} (1-\frac{1}{2M(F_{\alpha})})\leq  \frac{3}{4}+\frac{3}{4}\leq 2$
            
            \item[Case 2] We have one singular fibres of type $M(F_{\alpha})=2$. Then we have at most two singular fibres of type $M(F_{\alpha})=1$ and $\sum\limits_{\alpha\in \Delta} (1-\frac{1}{2M(F_{\alpha})})\leq  \frac{3}{4}+\frac{1}{2}+\frac{1}{2}\leq 2$

            \item[Case 3] We have zero singular fibre of type $M(F_{\alpha})=2$. Then we have at most four singular fibres of type $M(F_{\alpha})=1$ and $\sum\limits_{\alpha\in \Delta} (1-\frac{1}{2M(F_{\alpha})})\leq  \frac{1}{2}+\frac{1}{2}+\frac{1}{2}+\frac{1}{2}=2$
        \end{enumerate}
        Thus, $\zeta|_{D_1}$ is pseudo-effective.
        \end{proof}
\end{thm}

\begin{cor}\label{cor:final 1}
Let $S$ be a weak Del Pezzo surface of degree 4, and $f_1:S\rightarrow \mathbb{P}^1$ is a conic bundle structure of degree 1. If the relative tangent bundle $T_{f_1}$ is effective, then $T_S$ is almost nef.
    \begin{proof}
        Take $k=1$ and $E=T_{f_1}$ in Proposition \ref{all}, we get our conclusion.
    \end{proof} 
\end{cor}

\subsection{Conic bundle structures of degree 2}
 The following key technical lemma uses again Notation \ref{notation:F_alpha} and  Notation \ref{notation:C_i,j}.
\begin{lemma}\label{lemma:deg 5 wdp}
    Let $S$ be a weak Del Pezzo surface of degree 5 and $g:S\rightarrow \mathbb{P}^1$ be the conic bundle structure defined by the linear system $|2\mu^*H-\mu^*C_{i_1,j_1}-\mu^*C_{i_2,j_2}-\mu^*C_{i_3,j_3}-\mu^*C_{i_4,j_4}|$ as in Proposition \ref{pro: conic 2}, then for each singular fibre $F_{\alpha}$ of $g$, the image $\mu(F_{\alpha})$ is composed of two lines $l_1,l_2$ on $\mathbb{P}^2$. Moreover, we have 
    \begin{enumerate}[(1)]
        \item $|\Delta|\leq 3$.
        \item $M(F_{\alpha})\leq 2$.
        \item $M(F_{\alpha})=2$ if and only if $l_1=l_2$.
        \item If there exists a singular fibre $F_{\alpha}$, such that $M(F_{\alpha})=2$, then for any $\alpha'\neq \alpha$, we have $M(F_{\alpha})=1$ and $|\Delta|\leq 2$.
    \end{enumerate}
    \begin{proof}
    (1) For a weak Del Pezzo surface $S$ and a fibration $g:S\rightarrow \mathbb{P}^1$, we always have 
    \[
    \rho(S)\geq\rho(\mathbb{P}^1)+1+\sum_{\alpha\in \Delta}(m_{\alpha}-1)\geq 2+|\Delta|
    \]
    where $m_{\alpha}=\#(\text{irreducible components of }F_{\alpha})$. 
    So $|\Delta|\leq \rho(S)-2=3$.
    \\
    (2)
        The general fibres are just smooth conics, we only need to show this for singular fibres. For a singular fibre $F_{\alpha}$, the image $\mu(F_{\alpha})$ contains two lines $l_1,l_2$ by the last statement in Proposition \ref{S_4}.
        Each line passes through exactly two points in the generalized sense. Up to renumbering, we can assume that $l_1$ passes through $p_{i_1,j_1},p_{i_2,j_2}$ in the generalized sense and $l_2$ passes through $p_{i'_3,j'_3},p_{i'_4,j'_4}$ in the generalized sense. By Proposition \ref{2nd def of belonging} (1), we have 
        \begin{equation}\label{2nd def of belonging-1}
        \begin{cases}
        \mu^*l_1= \hat{l_1}+\mu^*C_{i_1,j_1}+\mu^*C_{i_2,j_2} \\
        \mu^*l_2= \hat{l_2}+\mu^*C_{i_3',j_3'}+\mu^*C_{i_4',j_4'}
        \end{cases}
        \end{equation}
        where $\hat{l_1}, \hat{l_2}$ are the strict transform of $l_1, l_2$ by $\mu$ respectively.
        Then
        \begin{equation}\label{muti 1-1}
                \begin{split}
                    F_{\alpha}\simeq &\mu^*(l_1+l_2)-\mu^*C_{i_1,j_1}-\mu^*C_{i_2,j_2}-\mu^*C_{i_3,j_3}-\mu^*C_{i_4,j_4}\\
                    =& \hat{l_1}+\hat{l_2}+\mu^*C_{i_3',j_3'}+\mu^*C_{i_4',j_4'}-\mu^*C_{i_3,j_3}-\mu^*C_{i_4,j_4}
                \end{split}
        \end{equation}
        By Proposition \ref{Pro:!} (1), it follows that $M(F_{\alpha})\leq 2$.

        (3) When $l_1=l_2$, we have $M(F_{\alpha})=2$. When $l_1\neq l_2$ and $M(F_{\alpha})=2$, by Proposition \ref{Pro:!} (1), we have $i'_3=i'_4$. This forces $i_3=i_4=i_3'=i_4'$, since otherwise $F_{\alpha}$ will not be effective. 

        We also have $j'_3\neq j_3$ and $j'_4\neq j_3$, since otherwise, by Proposition \ref{Pro:!} (1), we have $M(F_{\alpha})=1$. So the points $p_{i'_3,j'_3},p_{i'_4,j'_4},p_{i_3,j_3},p_{i_4,j_4}$ are different. Then we have 
        \[\{p_{i'_3,j'_3},p_{i'_4,j'_4}\}=\{p_{i_1,j_1},p_{i_2,j_2}\}\]
        Thus, the lines $l_1,l_2$ pass both through two points $p_{i_1,j_1},p_{i_2,j_2}$ in the generalized sense. This means that $l_1=l_2$, so we get a contradiction.

        (4) We claim that $m_{\alpha}\geq 3$, 
        where $m_{\alpha}$ is defined in the proof of (1). 
        Since $M(F_{\alpha})=2$, we have $l_1=l_2$ by (3). Thus, in Formula \eqref{2nd def of belonging-1}, we have $(i_1,j_1)=(i_3',j_3')$ and $(i_2,j_2)=(i_4',j_4')$. So Formula \eqref{muti 1-1} becomes 
        \begin{align*}
                \begin{split}
                    F_{\alpha}\simeq &\mu^*(l_1+l_2)-\mu^*C_{i_1,j_1}-\mu^*C_{i_2,j_2}-\mu^*C_{i_3,j_3}-\mu^*C_{i_4,j_4}\\
                    =& 2\hat{l_1}+\mu^*C_{i_1,j_1}+\mu^*C_{i_2,j_2}-\mu^*C_{i_3,j_3}-\mu^*C_{i_4,j_4}
                \end{split}  
        \end{align*}
        Since $F_{\alpha}$ is effective, by Proposition \ref{Pro:!} (1), we can assume that $i_1=i_3$ and $i_2=i_4$. And also by Proposition \ref{Pro:!} (1), $\mu^*C_{i_1,j_1}-\mu^*C_{i_1,j_3}=\mu^*C_{i_2,j_2}-\mu^*C_{i_2,j_4}$ if and only if $(i_1,j_1)=(i_2,j_2)$ and $(i_1,j_3)=(i_2,j_4)$. Thus, 
        \[F_{\alpha}\simeq 2\hat{l_1}+(\mu^*C_{i_1,j_1}-\mu^*C_{i_1,j_3})+(\mu^*C_{i_2,j_2}-\mu^*C_{i_2,j_4})\]
        contains at least three irreducible components. This proves the claim. 
        We also have:
        \[
        \rho(S)=\rho(\mathbb{P}^1)+1+(m_{\alpha}-1)+\sum_{\alpha'\neq \alpha}(m_{\alpha'}-1)\geq 4+|\Delta \backslash \alpha|
        \].
        So $|\Delta \backslash\alpha|\leq \rho(S)-4=1$. Note also that for any $\alpha'\neq \alpha$, we have $M(F_{\alpha}')=1$, otherwise by the same argument, we have $m_{\alpha'}\geq 3$, and therefore 
        \[
        5=\rho(S)-\rho(\mathbb{P}^1)\geq m_\alpha+m_{\alpha'}\geq 6
        \]
        We get a contradiction. 
    \end{proof}
\end{lemma}

\begin{thm}\label{positivity 2}
Let $S$ be a weak Del Pezzo surface of degree $4$ such that four points are in good position and minimal. 
Let $g:S\rightarrow \mathbb{P}^1$ be the fibration defined in Proposition~\ref{pro: conic 2}. 
Then there exists an effective divisor $D_2\in |\zeta-\pi^*T_{g}|$ such that $\zeta|_{D_2}$ is pseudo-effective.
\begin{proof}
By Proposition~\ref{pro:zeta on D} (3), it suffices to show that
\[
\sum_{\alpha\in \Delta}\!\left(1-\frac{1}{2M(F_{\alpha})}\right)\le 2 .
\]

By Proposition~\ref{pro: conic 2}, we may assume that $g$ is induced by four points 
\[
p_{i_1,j_1},p_{i_2,j_2},p_{i_3,j_3},p_{i_4,j_4}
\]
which are in good position and minimal. Let $p_{i_5,j_5}$ be the last point.
As in the proof of Proposition~\ref{pro: conic 2} $(2)\Rightarrow(1)$, we have the diagram
\[
\xymatrix{
S \ar[r]^{\gamma_1} \ar[dr]_{g}& S_4 \ar[d]^{g'} \ar[r]^{\gamma_2}& \mathbb{P}^2 \\
&\mathbb{P}^1  & 
}
\]
where $S_4$ is a weak Del Pezzo surface of degree $5$, and $\gamma_1$ is the contraction of $E_{i_5,j_5}$. 
The morphisms $g'$ and $g$ are conic bundle structures.

Let $F'_{\alpha}\in\Delta'$ and $F_{\alpha}\in\Delta$ be the singular fibres of 
$g'$ and $g$ respectively. 
According to Lemma~\ref{lemma:deg 5 wdp} and the position of $p_{i_5,j_5}$, 
we have the following possibilities.

\begin{enumerate}
\item[Case 1] For every $\alpha\in \Delta'$, we have $M(F'_{\alpha})=1$. 
By Lemma~\ref{lemma:deg 5 wdp} (1), this implies $|\Delta'|\le 3$.

\begin{enumerate}
\item[Case 1.1] The point $p_{i_5,j_5}$ lies on a smooth fibre of $g'$. 
Then $|\Delta|=|\Delta'|+1\le 4$ and $M(F_{\alpha})=1$ for all $\alpha\in \Delta$. Hence
\[
\sum_{\alpha\in \Delta}\!\left(1-\frac{1}{2M(F_{\alpha})}\right)
\le \tfrac12+\tfrac12+\tfrac12+\tfrac12=2 .
\]

\item[Case 1.2] The point $p_{i_5,j_5}$ lies on a singular fibre $F'_{\alpha_0}$ of $g'$. 
Then $|\Delta|=|\Delta'|\le 3$. Since $M(F'_{\alpha_0})=1$, the fibre $F'_{\alpha_0}$ is reduced and has simple normal crossings. 
After blowing up the point $p_{i_5,j_5}$ on $F'_{\alpha_0}$, we obtain
\[
M(F_{\alpha_0})\le 2,
\qquad \text{where }
F_{\alpha_0}=\gamma_1^*F'_{\alpha_0}.
\]
Thus there exists $\alpha_0\in \Delta$ such that $M(F_{\alpha_0})\le 2$ and $M(F_{\alpha})=1$ for $\alpha\ne\alpha_0$. Hence
\[
\sum_{\alpha\in \Delta}\!\left(1-\frac{1}{2M(F_{\alpha})}\right)
\le \tfrac34+\tfrac12+\tfrac12<2 .
\]
\end{enumerate}

\item[Case 2] There exists $\alpha_0\in \Delta'$ such that $M(F'_{\alpha_0})=2$. 
By Lemma~\ref{lemma:deg 5 wdp} (4), we have $|\Delta'|\le 2$.

\begin{enumerate}
\item[Case 2.1] The point $p_{i_5,j_5}$ lies on a smooth fibre of $g'$. 
Then $|\Delta|=|\Delta'|+1\le 3$. Combining with Lemma~\ref{lemma:deg 5 wdp} (4), we obtain that there exists $\alpha_0\in\Delta$ such that $M(F_{\alpha_0})=2$, while $M(F_{\alpha})=1$ for $\alpha\ne\alpha_0$. Hence
\[
\sum_{\alpha\in \Delta}\!\left(1-\frac{1}{2M(F_{\alpha})}\right)
\le \tfrac34+\tfrac12+\tfrac12<2 .
\]

\item[Case 2.2] The point $p_{i_5,j_5}$ lies on a singular fibre of $g'$. 
Then $|\Delta|=|\Delta'|\le 2$. Hence
\[
\sum_{\alpha\in \Delta}\!\left(1-\frac{1}{2M(F_{\alpha})}\right)
\le 1+1=2 .
\]
\end{enumerate}
\end{enumerate}
\end{proof}
\end{thm}

\section{Proof of the Main Theorem}
\begin{thm}\label{thm:2 part final prove}
Let $S$ be a weak Del Pezzo surface of degree 4.
    If there exist four points $p_{i_1,j_1}, p_{i_2,j_2},p_{i_3,j_3},p_{i_4,j_4}$ which are in good position and minimal, then $T_S$ is almost nef.
    \begin{proof}
    Let $f$ be the conic bundle structure defined by $|\mu^*H-\mu^*C_{i_5,1}|$ 
     as in Proposition \ref{conic 1}
     and take $g$ to be the conic bundle structure defined by 
     \[2\mu^*H-\mu^*C_{i_1,j_1}- \mu^*C_{i_2,j_2}-\mu^*C_{i_3,j_3}-\mu^*C_{i_4,j_4}\]
     as in Proposition \ref{pro: conic 2}.
    By Theorem \ref{positivity 1} and \ref{positivity 2}, there exist effective divisors $D_1\in |\zeta-\pi^*T_{f}|$, and $D_2\in |\zeta-\pi^*T_{g}|$ such that $\zeta|_{D_1}$ and $\zeta|_{D_2}$ are pseudo-effective. This implies that $\zeta|_{D_1+D_2}$ is pseudo-effective. 

    Recall that $T_{f}\simeq -K_S+f^*K_{\mathbb{P}^1}+R(f)$ and $T_{g}\simeq -K_S+g^*K_{\mathbb{P}^1}+R(g)$. Denote a general fibre of $f$ by $F_1$ and a general fibre of $g$ by $F_2$. Then by Proposition \ref{Pro:!} (5), we have 
    \begin{align}\label{eq:final-2-1}
        F_2\simeq 2\mu^*H-\mu^*C_{i_1,j_1}- \mu^*C_{i_2,j_2}-\mu^*C_{i_3,j_3}-\mu^*C_{i_4,j_4}= -K_S-(\mu^*H-\mu^*C_{i_5,j_5})
    \end{align}
    So 
        \begin{align*}
T_f + T_g 
&\simeq -K_S + f^*K_{\mathbb{P}^1} - K_S + g^*K_{\mathbb{P}^1} + R(f) + R(g) 
&\qquad& \notag\\
&= -2K_S - 2F_1 - 2F_2 + R(f) + R(g) 
&& K_{\mathbb{P}^1} = \mathcal{O}_{\mathbb{P}^1}(-2) \notag\\
&\ge -2K_S - 2F_1 - 2F_2 
&& R(f),R(g) \ge 0 \notag\\
&\simeq -2K_S - 2(\mu^*H - \mu^*C_{i_5,1}) 
&& \text{by }\eqref{eq:final-2-1}\notag\\
&\quad - 2\bigl(-K_S - (\mu^*H - \mu^*C_{i_5,j_5})\bigr) 
&& \notag\\
&= 2(\mu^*C_{i_5,1} - \mu^*C_{i_5,j_5}) \ge 0. 
\end{align*}
Thus, the divisor $E:=(T_{f}+T_{g})$ is effective and $T_S$ is almost nef by Proposition \ref{all}.
    \end{proof}
\end{thm}

We are left to deal with the case where there are no four points which are in good position, our goal is now to show that there exists a conic bundle structure $f_1:S\rightarrow \mathbb{P}^1$ of degree 1, such that $T_{f_1}$ is effective.

\begin{lemma}\label{same base}
    In the situation of Setup \ref{setup:p1}, let $D$ be a smooth line on $\mathbb{P}^2$. If the two divisors 
    \begin{equation}
        \begin{cases}
            \mu^*D-\mu^*C_{i_1,j_1}-\mu^*C_{i_2,j_2}-\mu^*C_{i_3,j_3}\\
            \mu^*D-\mu^*C_{i_1,j_1}-\mu^*C_{i_2,j_2}-\mu^*C_{i_4,j_4}
        \end{cases}
    \end{equation}
    are effective. Then $i_3=i_4$.
    \begin{proof}
        If $i_3\neq i_4$, then $\operatorname{Supp}(\mu^*C_{i_3,j_3})\cap \operatorname{Supp}(\mu^*C_{i_4,j_4})=\emptyset$. Thus,
        \[
        (\mu^*D-\mu^*C_{i_1,j_1}-\mu^*C_{i_2,j_2})-\mu^*C_{i_3,j_3}-\mu^*C_{i_4,j_4}
        \]
        is effective. Take $B=\{p_{i_1,j_1},p_{i_2,j_2},p_{i_3,j_3},p_{i_4,j_4}\}$ and use Lemma \ref{lemma:passes at least}, the line $D$ passes through at least 4 points in the generalized sense. This yields an irreducible curve whose self intersection number is less than $(-3)$. We get a contradiction.
    \end{proof}
\end{lemma}
Recall that for a simple surface, we defined an integer $r$ in Proposition \ref{graph}.
\begin{proposition}\label{r > 5}
    Let $S$ be a simple surface. If $r\geq 5$, then there exists a conic bundle structure of degree 2 $f:S\rightarrow \mathbb{P}^1$.
    \begin{proof}
    We claim that for the five points $p_{1,1},p_{2,1},p_{3,1}, p_{4,1},p_{5,1}$, there are four points of them which are in good position. We argue by contradiction and assume that none of four points of them are in good position.

        Take $p_{1,1},p_{2,1},p_{3,1}$ to be three points which are not colinear and let $l_{s,t}$ be the lines defined by $p_{s,1},p_{t,1}$ on $\mathbb{P}^2$, for $1\leq s<t\leq 3$. 
        Then the points $p_{4,1}$, $p_{5,1}$ belong to the union of $l_{12},l_{12},l_{23}$. Otherwise there will be four points in good position. Up to renumbering, we can assume that $p_{4,1}\in l_{12}$ and $p_{5,1}\in l_{23}$. We draw a picture as follows:

\begin{tikzpicture}[scale=1.5]

\draw[->, very thin, gray] (-0.5,0) -- (4.5,0);
\draw[->, very thin, gray] (0,-0.5) -- (0,3.5);

\coordinate (p1) at (0,0);
\coordinate (p2) at (4,0);
\coordinate (p3) at (1,3);

\coordinate (p4) at (3,0); 

\coordinate (p5) at (0.8,2.4); 

\draw[thick] (p1) -- (p2) node[midway, below] {$l_{12}$}; 
\draw[thick] (p2) -- (p3) node[midway, above right] {$l_{23}$}; 
\draw[thick] (p1) -- (p3) node[midway, left] {$l_{13}$}; 

\filldraw[black] (p1) circle (2pt) node[below left] {$p_{1,1}$};
\filldraw[black] (p2) circle (2pt) node[below right] {$p_{2,1}$};
\filldraw[black] (p3) circle (2pt) node[above] {$p_{3,1}$};
\filldraw[black] (p4) circle (2pt) node[below] {$p_{4,1}$};
\filldraw[black] (p5) circle (2pt) node[above right] {$p_{5,1}$};

\end{tikzpicture}

Then we can easily check that $p_{2,1},p_{3,1},p_{4,1},p_{5,1}$ are in good position. We get a contradiction.
    \end{proof}
\end{proposition}
Recall that a weak Del Pezzo surface $S$ of degree 4 is the blow up of 5 points $p_{i_1,j_1},...,p_{i_5,j_5}$. Let $p_{i_1,j_1}=p_{1,1}$. For a conic bundle structure $f:S\rightarrow \mathbb{P}^1$ of degree 1, we can assume that it is induced by the linear system $|\mu^*H-\mu^*C_{1,1}|$, and by Notation \ref{notation:relative tangent bundle}, we have 
\begin{align}\label{eq: formula T_f}
    \begin{split}
        T_{f_1}&=-K_S-2(\mu^*H-\mu^*C_{1,1})+R(f_1)\\
        &=\mu^*H+\mu^*C_{1,1}+R(f_1)-(\mu^*C_{i_2,j_2}+\mu^*C_{i_3,j_3}+\mu^*C_{i_4,j_4}+\mu^*C_{i_5,j_5})
    \end{split}
\end{align}

\begin{lemma}\label{lemma:next-1}
Let $S$ be a simple surface. 
Assume that the points 
$p_{i_1,j_1}, p_{i_2,j_2}, p_{i_3,j_3}$ 
are colinear in the generalized sense. 
Suppose further that the four points 
$p_{i_2,j_2}, p_{i_3,j_3}, p_{i_4,j_4}, p_{i_5,j_5}$ 
are not in good position. 
Then one of the following holds:
\begin{enumerate}[(1)]
\item One of the divisors
\begin{align*}
    \begin{cases}
        \mu^*H-\mu^*C_{i_2,j_2}-\mu^*C_{i_4,j_4}-\mu^*C_{i_5,j_5}\\
        \mu^*H-\mu^*C_{i_3,j_3}-\mu^*C_{i_4,j_4}-\mu^*C_{i_5,j_5}
    \end{cases}
\end{align*}
is effective; or
\item $\{i_4,i_5\}\cap \{i_1\} \neq \emptyset$.
\end{enumerate}
\begin{proof}
    Since four points $p_{i_2,j_2}, p_{i_3,j_3}, p_{i_4,j_4}, p_{i_5,j_5}$ are not in good position, by Definition \ref{def:good position}, one of the divisors 
    \begin{align*}
        \begin{cases}
        \mu^*H-\mu^*C_{i_2,j_2}-\mu^*C_{i_4,j_4}-\mu^*C_{i_5,j_5}\\
        \mu^*H-\mu^*C_{i_3,j_3}-\mu^*C_{i_4,j_4}-\mu^*C_{i_5,j_5}
    \end{cases}
    \end{align*}
    is effective, which implies (1), or one of the divisiors 
    \begin{align*}
        \begin{cases}
        \mu^*H-\mu^*C_{i_2,j_2}-\mu^*C_{i_3,j_3}-\mu^*C_{i_4,j_4}\\
        \mu^*H-\mu^*C_{i_2,j_2}-\mu^*C_{i_3,j_3}-\mu^*C_{i_5,j_5}
    \end{cases}
    \end{align*}
    is effective. Without loss of generality, we assume that $\mu^*H-\mu^*C_{i_2,j_2}-\mu^*C_{i_3,j_3}-\mu^*C_{i_4,j_4}$ is effective. Take $l:=\mu(\mu^*H-\mu^*C_{i_2,j_2}-\mu^*C_{i_3,j_3}-\mu^*C_{i_4,j_4})$. 
    
    By assumption the points 
$p_{i_1,j_1}, p_{i_2,j_2}, p_{i_3,j_3}$ 
are collinear in the generalized sense, so the divisor 
\[\mu^*H-\mu^*C_{i_1,j_1}-\mu^*C_{i_2,j_2}-\mu^*C_{i_3,j_3}\]
is effective. Take $l':=\mu(\mu^*H-\mu^*C_{i_1,j_1}-\mu^*C_{i_2,j_2}-\mu^*C_{i_3,j_3})$.

By Lemma \ref{lemma:passes at least}, the line $l$ and $l'$ pass through the points $\operatorname{Min}(\{p_{i_1,j_1},p_{i_2,j_2}\})$(See Definition \ref{minimal} for notation $\operatorname{Min}$). So $l=l'$ and the divisors 
\begin{align*}
    \begin{cases}
        \mu^*l-\mu^*C_{i_1,j_1}-\mu^*C_{i_2,j_2}-\mu^*C_{i_3,j_3}\\
        \mu^*l-\mu^*C_{i_2,j_2}-\mu^*C_{i_3,j_3}-\mu^*C_{i_4,j_4}
    \end{cases}
\end{align*}
are effective.
By Lemma \ref{same base}, we get either $i_4=i_1$ or $i_5=i_1$, which is (2).
\end{proof}
\end{lemma}

\begin{thm}\label{thm:bad cases}
    Let $S$ be a weak Del Pezzo surface of degree 4, and there are no four points which are in good position. Then there exists a conic bundle structure 
    \[
    f_1:S\rightarrow \mathbb{P}^1
    \]
    such that $T_{f_1}$ is effective.
    \begin{proof}
        Since there are no four points which are in good position, we can assume that there exists a line $l$ which passes through three points $p_{i_1,j_1},p_{i_2,j_2},p_{i_3,j_3}$ and does not pass through the other points. By Lemma \ref{lemma:passes at least}, the three points $p_{i_1,j_1},p_{i_2,j_2},p_{i_3,j_3}$ can be chosen to be minimal. so there are 3 possibilities for the geometric structure of the line $l$. 
        \begin{enumerate}
            \item [Type 1] \begin{tikzpicture}[baseline=(current bounding box.center)]
  \node (A) at (0,0) {$\bullet$};
  \node (B) at (1,0) {$\bullet$};
  \node (C) at (2,0) {$\bullet$};  
  \node[below] at (A.south) {$p_{1,1}$};
  \node[below] at (B.south) {$p_{1,2}$};
  \node[below] at (C.south) {$p_{1,3}$};

  \draw[->] (B)--(A);
  \draw[->] (C)--(B);
\end{tikzpicture}\\
\item [Type 2] \begin{tikzpicture}[baseline=(current bounding box.center)]
  \node (A) at (0,0) {$\bullet$};
  \node (B) at (1,0) {$\bullet$};
  \node (C) at (2,0) {$\bullet$};  
  \node[below] at (A.south) {$p_{1,1}$};
  \node[below] at (B.south) {$p_{2,1}$};
  \node[below] at (C.south) {$p_{2,2}$};

  \draw[->] (C)--(B);
\end{tikzpicture}\\
\item [Type 3]\begin{tikzpicture}[baseline=(current bounding box.center)]
  \node (A) at (0,0) {$\bullet$};
  \node (B) at (1,0) {$\bullet$};
  \node (C) at (2,0) {$\bullet$};  
  \node[below] at (A.south) {$p_{1,1}$};
  \node[below] at (B.south) {$p_{2,1}$};
  \node[below] at (C.south) {$p_{3,1}$};
\end{tikzpicture}\\
        \end{enumerate}
We make a case distinction:

\textbf{Case 1: There exists a line of Type 1.}
Take $f_1$ to be the conic bundle structure induced by $|\mu^*H-\mu^*C_{1,1}|$. 
    Then there exists a singular fibre $F_{\alpha_0}$ induced by $l$, such that 
    \begin{equation}
\begin{alignedat}{2}
F_{\alpha_0}
&= \mu^*l - \mu^*C_{1,1} \\
&= \hat{l} + \mu^*C_{1,2} + \mu^*C_{1,3}
&\quad &\text{since } 
\mu^*l = \hat{l} + \mu^*C_{1,1} + \mu^*C_{1,2} + \mu^*C_{1,3} \\
&= \hat{l} + E_{1,2} + 2\mu^*C_{1,3}
&\quad &\text{by Lemma~\ref{Pro:!} (1)}.
\end{alignedat}
\end{equation}
where $\hat{l}$ is the strict transform of $l$ by $\mu$. Thus, 
\[
R(f_1)=\sum_{\alpha\in \Delta}(F_{\alpha}-(F_{\alpha})_{red})\geq F_{\alpha_0}-(F_{\alpha_0})_{red}= \mu^*C_{1,3}
\]
Recall that we defined the integer $l(1)$ in the Notation \ref{notation:C_i,j} and we do a classification with $l(1)$.
    \begin{enumerate}
        \item [Case 1.1] When $l(1)=3$, we have $\{i_4,i_5\}\cap \{1\}=\emptyset$. The three points $p_{1,1},p_{1,2},p_{1,3}$ are colinear in the generalized sense and the four points $p_{1,1},p_{1,2},p_{i_4,j_4},p_{i_5,j_5}$ are not in good position. So by Lemma \ref{lemma:next-1} and Lemma \ref{lemma:passes at least}, we always have 
        \begin{align}\label{eq:bad cases-1.1}
            \mu^*H-\mu^*C_{1,1}-\mu^*C_{i_4,j_4}-\mu^*C_{i_5,j_5}\geq 0
        \end{align}
        So 
        \begin{align*}
T_{f_1}\simeq\;&(\mu^*H-\mu^*C_{1,1}-\mu^*C_{i_4,j_4}-\mu^*C_{i_5,j_5})\\
&+(\mu^*C_{1,1}-\mu^*C_{1,2})
+(\mu^*C_{1,1}-\mu^*C_{1,3})
+R(f_1)
&&\text{by \eqref{eq: formula T_f}}
\end{align*}
Thus, $T_{f_1}$ is effective by Lemma~\ref{Pro:!} (1) and \eqref{eq:bad cases-1.1}.
        \item [Case 1.2] When $l(1)\geq 4$, we take $p_{i_4,j_4}=p_{1,4}$. By Lemma \ref{effective} and $R(f_1)\geq \mu^*C_{1,3}$, 
        \begin{align*}
                T_{f_1}&\simeq (\mu^*H-\mu^*C_{1,1}-\mu^*C_{i_5,j_5})+(\mu^*C_{1,1}-\mu^*C_{1,2})\\
                &+(\mu^*C_{1,1}-\mu^*C_{1,4})+(R(f_1)-\mu^*C_{1,3})
                &&\text{by \eqref{eq: formula T_f}}
        \end{align*}
    Thus, $T_{f_1}$ is effective by {by Lemma~\ref{Pro:!} (1) and Lemma \ref{effective}.}
    \end{enumerate}
    \textbf{Case 2: There exists a line of Type 2.}
    Take $f_1$ to be the conic bundle structure induced by $|\mu^*H-\mu^*C_{1,1}|$.
    We use the same technique in Case 1 to get  $R(f_1)\geq \mu^*C_{2,2}$. We make a distinction depending on $l(1)$.
    \begin{enumerate}
        \item [Case 2.1] When $l(1)=1$, we have $\{i_4,i_5\}\cap \{1\}=\emptyset$. The three points $p_{1,1},p_{2,1},p_{2,2}$ are colinear in the generalized sense and four points $p_{2,1},p_{2,2},p_{i_4,j_4},p_{i_5,j_5}$ are not in good position. Thus, by Lemma \ref{lemma:next-1} and Lemma \ref{lemma:passes at least}, we always have 
        \begin{align}\label{eq:bad cases-2.1}
            \mu^*H-\mu^*C_{2,1}-\mu^*C_{i_4,j_4}-\mu^*C_{i_5,j_5}\geq 0
        \end{align}
 So,
        \begin{align*}
                T_{f_1}&\simeq (\mu^*H-\mu^*C_{2,1}-\mu^*C_{i_4,j_4}-\mu^*C_{i_5,j_5})\\
                &+\mu^*C_{1,1}+(R(f_1)-\mu^*C_{2,2}) &&\text{by \eqref{eq: formula T_f} }
        \end{align*}
        Thus $T_{f_1}$ is effective by $R(f_1)\geq \mu^*C_{2,2}$ and \eqref{eq:bad cases-2.1}.
        \item [Case 2.2] $l(1)=2$, we can assume that $p_{i_4,j_4}=p_{1,2}$. Then by Formula \eqref{eq: formula T_f} and Proposition \ref{Pro:!} (1),
        \begin{align*}
                T_{f_1}&\simeq (\mu^*H-\mu^*C_{2,1}-\mu^*C_{i_5,j_5})\\
                &+(\mu^*C_{1,1}-\mu^*C_{1,2})+(R(f_1)-\mu^*C_{2,2}) &&\text{by \eqref{eq: formula T_f}}
        \end{align*}
        Thus, $T_{f_1}$ is effective by Lemma~\ref{Pro:!} (1), Lemma \ref{effective} and $R(f_1)\geq \mu^*C_{2,2}$.
        \item [Case 2.3] $l(1)=3$, then we have $p_{i_4,j_4}=p_{1,2}$ and $p_{i_5,j_5}=p_{1,3}$. Then the divisor 
        \[
        \mu^*H-\mu^*C_{2,1}-\mu^*C_{1,2}-\mu^*C_{1,3}
        \]
        is not effective. Otherwise, by Lemma \ref{lemma:passes at least}, the effective divisor defines a line which passes through points $\operatorname{Min}(p_{2,1},p_{1,2},p_{1,3})=\{p_{2,1},p_{1,1},p_{1,2}\}$. Thus,
        \[
        \mu^*H-\mu^*C_{2,1}-\mu^*C_{1,1}-\mu^*C_{1,2}
        \]
        is effective. By assumption, we have 
        \[
        \mu^*H-\mu^*C_{2,1}-\mu^*C_{1,1}-\mu^*C_{2,2}
        \]
        is effective. Thus, by Lemma \ref{same base}, we get a contradiction. 

        Now recall that the three points $p_{1,1},p_{2,1},p_{2,2}$ are colinear in the generalized sense and four points $p_{2,1},p_{2,2},p_{1,2},p_{1,3}$ are not in good position. Thus, by Lemma \ref{lemma:next-1}, the divisor
        \[\mu^*H-\mu^*C_{1,1}-\mu^*C_{1,2}-\mu^*C_{1,3}\]
        is effective. 
        This defines a line $l'\subset \mathbb{P}^2$, such that $l'$ passes through points $p_{1,1},p_{1,2},p_{1,3}$ in the generalized sense. As a result, $l'$ is a line of type 1. The present case then follows from the argument in Case 1.
    \end{enumerate}
    \textbf{Case 3: There exists a line of Type 3.}
    Take $f_1$ to be the conic bundle structure induced by $|\mu^*H-\mu^*C_{1,1}|$.
    Since there are only 5 points, we can assume that $l(1)=1$. Thus, we have $\{i_4,i_5\}\cap \{1\}=\emptyset$. The three points $p_{1,1},p_{2,1},p_{3,1}$ are colinear in the generalized sense and the four points $p_{2,1},p_{3,1},p_{i_4,j_4},p_{i_5,j_5}$ are not in good position. By Lemma \ref{lemma:next-1}, we have one of the following divisors 
    \begin{align*}
        \begin{cases}
            \mu^*H-\mu^*C_{2,1}-\mu^*C_{i_4,j_4}-\mu^*C_{i_5,j_5}\\
            \mu^*H-\mu^*C_{3,1}-\mu^*C_{i_4,j_4}-\mu^*C_{i_5,j_5}
        \end{cases}
    \end{align*}
    is effective. Without loss of generality, we assume that 
    \[\mu^*H-\mu^*C_{2,1}-\mu^*C_{i_4,j_4}-\mu^*C_{i_5,j_5}\]
    is effective. So this gives another line $l'$. If the line $l'$ is Type 3, then we have $r=5$. By Proposition \ref{r > 5}, there are four points in good position, so we get a contradiction. Thus, $l'$ is Type 1 or 2 and we conclude by Case 1 or 2.
    \end{proof}
\end{thm}

\begin{thm}
    For every weak Del Pezzo surface $S$ of degree $4$, the tangent bundle $T_S$ is almost nef.
    \begin{proof}
        If there are four points in good position, then by Theorem \ref{thm:2 part final prove}, we have $T_S$ is almost nef. If for any four points, they are not in good position, then by Theorem \ref{thm:bad cases}, there exists a conic bundle structure $f:S\rightarrow \mathbb{P}^1$ such that $T_f$ is effective. Thus, by Corollary \ref{cor:final 1}, we have $T_S$ is almost nef.
    \end{proof}
\end{thm}

\bibliographystyle{alpha}
\bibliography{a}

\bigskip

\noindent
Qimin Zhang\\
Université Côte d’Azur\\
CNRS, Laboratoire J.-A. Dieudonné (LJAD)\\
06108 Nice, France\\
\textit{E-mail address}: \texttt{Qimin.ZHANG@univ-cotedazur.fr}

\end{document}